\newif\ifarxiv
\pgfplotsset{compat=1.18}
\newtheorem{proposition}{Proposition}[section]
\newtheorem{fact}{Fact}
\newcommand{\factref}[1]{\ref{fact:e}\ref{fact:e:#1}}
\newcommand{\latinenglish}[2]{\noindent\hfill\begin{minipage}[t]{0.48\textwidth}\small\textit{#1}\end{minipage}\hfill\begin{minipage}[t]{0.48\textwidth}\small#2\end{minipage}\hfill\null}
\begin{document}
\begin{CJK}{UTF8}{gbsn}

\title{A cute proof that makes $e$ natural}

\author{Po-Shen Loh\thanks{Department of Mathematical Sciences, Carnegie
  Mellon University. Email:
  \texttt{\href{mailto:hello@poshenloh.com}{hello@poshenloh.com}}. For a
  discussion designed for the general public, see
  \url{https://www.poshenloh.com/e}}}

\date{}

\maketitle

\begin{abstract}
  The number $e$ has rich connections throughout mathematics, and has the
  honor of being the base of the natural logarithm. However, most students
  finish secondary school (and even university) without suitably memorable
  intuition for why $e$'s various mathematical properties are related. This
  article presents a solution.

  Various proofs for all of the mathematical facts in this article have
  been well-known for years. This exposition contributes a short,
  conceptual, intuitive, and visual proof (comprehensible to Pre-Calculus
  students) of the equivalence of two of the most commonly-known properties
  of $e$, connecting the continuously-compounded-interest limit $\big(1 +
  \frac{1}{n}\big)^n$ to the fact that $e^x$ is its own derivative. The
  exposition further deduces a host of commonly-taught properties of $e$,
  while minimizing pre-requisite knowledge, so that this article can be
  practically used for developing secondary school curricula.

  Since $e$ is such a well-trodden concept, it is hard to imagine that our
  visual proof is new, but it certainly is not widely known.  The author
  checked 100 books across 7 countries, as well as YouTube videos totaling
  over 25 million views, and still has not found this method taught
  anywhere. This article seeks to popularize the 3-page explanation of $e$,
  while providing a unified, practical, and open-access reference for
  teaching about $e$.
\end{abstract}

\section{Introduction}

The number $\pi$ has such iconic status that March 14 was declared by
UNESCO as the International Day of Mathematics \cite{UNESCO2019}, which
many people celebrate by eating circular pies and reciting digits starting
from 3.14.  The number $e$, however, has a dramatically lower profile,
despite it also being important enough to merit a dedicated calculator
button ($e^x$ or \textit{ln}\/), and a 200-page book (\textit{$e$: The
Story of a Number}\/ by Maor \cite{Maor1994}).

Indeed, $e$ has remarkable and fundamentally important properties, with
diverse applications, but most students do not comprehend why these
properties are connected to each other. Perhaps this is because when
students encounter $e$ for the first time, many mainstream teaching methods
define $e$ through one of the following properties (which particular one
varies by country, as surveyed in Section \ref{sec:curricula}), state that
$e \approx 2.718281828459045$, possibly deduce or numerically check one or
two other properties, and ultimately postpone some proofs to a future year.

\begin{fact}
  \label{fact:e}

  All of the following are true.

  \begin{enumerate}[label=(\roman*)]
    \item
      \label{fact:e:compound-interest}
      The expression $\big( 1 + \frac{1}{n} \big)^n$ approaches $e$ as $n$
      grows (known as the \textit{второй замечательный предел}, or ``second
      remarkable limit'' in Russian)
    \item
      \label{fact:e:compound-interest-x}
      The expression $\big( 1 + \frac{x}{n} \big)^n$ approaches $e^x$ as
      $n$ grows
    \item 
      \label{fact:e:derivative-1}
      There is a unique real number ($e$) which, when used as the base of
      an exponential function, produces a graph whose tangent line at its
      $y$-intercept has slope 1
    \item
      \label{fact:e:derivative}
      The function $e^x$ is its own derivative
    \item 
      \label{fact:e:differential-equation}
      The function $e^x$ is the solution to the differential equation $y' =
      y$ with initial condition $y(0) = 1$
    \item 
      \label{fact:e:1/x-integral}
      $\displaystyle \frac{d}{dx} \log_e x = \frac{1}{x}$ and
      $\displaystyle \int_1^x \frac{dt}{t} = \log_e x$
    \item
      \label{fact:e:series}
      $\displaystyle e = \frac{1}{0!} + \frac{1}{1!} + \frac{1}{2!} +
      \frac{1}{3!} + \cdots$
    \item
      \label{fact:e:taylor}
      $\displaystyle e^x = \frac{1}{0!} + \frac{x}{1!} + \frac{x^2}{2!} +
      \frac{x^3}{3!} + \cdots$
    \item 
      \label{fact:e:euler}
      $\displaystyle e^{i \theta} = \cos \theta + i \sin \theta$, which for
      $\theta = \pi$, gives the most beautiful equation in math:
      $\displaystyle e^{i \pi} + 1 = 0$
  \end{enumerate}
\end{fact}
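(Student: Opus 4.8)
The plan is to take the continuously compounded interest limit as the single starting point and let the other eight facts cascade from it. First I would define $\exp(x) := \lim_{n\to\infty}\bigl(1+\tfrac{x}{n}\bigr)^{n}$ and justify that this limit exists: in the case $x = 1$ the sequence $\bigl(1+\tfrac1n\bigr)^{n}$ is increasing (apply AM--GM to the $n$ numbers $1+\tfrac1n$ together with one extra copy of $1$) and bounded above by $3$ (expand by the binomial theorem and compare the tail with a geometric series), so it converges; call the limit $e$. A rescaling substitution then shows $\bigl(1+\tfrac{x}{n}\bigr)^{n}\to e^{x}$ for every real $x$, which is exactly (i) and (ii); equivalently, multiplying the defining limits for $x$ and $y$ and re-indexing yields the functional equation $\exp(x)\exp(y) = \exp(x+y)$, from which $\exp(x) = e^{x}$ also follows.

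The crux of the whole argument --- and the step I expect to be the main obstacle, since making it simultaneously rigorous and genuinely elementary is precisely what the paper claims as its contribution --- is the bridge from this limit to the derivative statements (iii) and (iv). The visual idea is that every finite-$n$ curve $f_{n}(x) = \bigl(1+\tfrac{x}{n}\bigr)^{n}$ already passes through $(0,1)$ with slope exactly $1$, because $f_{n}'(x) = \bigl(1+\tfrac{x}{n}\bigr)^{n-1}$ and hence $f_{n}'(0) = 1$, so one would like to read off that the limiting curve $e^{x}$ inherits both the value $1$ and the slope $1$ at its $y$-intercept, which is (iii). The delicate point is justifying that the slopes survive the passage to the limit; I would make this airtight by squeezing $e^{x}$ between the elementary bounds $1 + x \le e^{x} \le \tfrac{1}{1-x}$, valid for $|x| < 1$ (the left inequality is Bernoulli's inequality applied to the terms $\bigl(1+\tfrac{x}{n}\bigr)^{n}$, and the right one follows from $\bigl(1+\tfrac{x}{n}\bigr)\bigl(1-\tfrac{x}{n}\bigr) \le 1$ together with Bernoulli again), because both bounds equal $1$ with slope $1$ at the origin and therefore force $\lim_{h\to 0}\tfrac{e^{h}-1}{h} = 1$. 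Combining this with the functional equation gives, at any point $a$, $\tfrac{d}{dx}e^{x}\big|_{x=a} = e^{a}\lim_{h\to 0}\tfrac{e^{h}-1}{h} = e^{a}$, which is (iv); restating (iv) as an initial value problem is (v), the uniqueness there being the elementary remark that $y' = y$ implies $\bigl(y(x)e^{-x}\bigr)' = 0$.

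From (iv) the remaining facts are routine. Since $e^{x}$ is strictly increasing with range $(0,\infty)$ it has an inverse $\log_{e}$, and the inverse-function rule gives $\tfrac{d}{dx}\log_{e} x = 1/e^{\log_{e} x} = 1/x$; the Fundamental Theorem of Calculus then upgrades this to $\int_{1}^{x}\tfrac{dt}{t} = \log_{e} x$, which is (vi). For (vii) and (viii) I would use that $y' = y$ with $y(0) = 1$ forces $y^{(k)}(0) = 1$ for all $k$, so the Taylor series of $e^{x}$ must be $\sum_{k\ge 0} x^{k}/k!$; that this series actually sums to $e^{x}$ follows either by bounding the Lagrange remainder or, more cleanly, by checking that $S(x) := \sum_{k\ge 0} x^{k}/k!$ satisfies $S' = S$ and $S(0) = 1$ and invoking the same uniqueness as above, and then setting $x = 1$ gives (vii). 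Finally, for (ix) I would extend the power series to the imaginary axis, or --- to remain inside the differential-equation picture --- set $g(\theta) = \cos\theta + i\sin\theta$, note that $g'(\theta) = i\,g(\theta)$ and $g(0) = 1$, and compare with $e^{i\theta}$ via $\tfrac{d}{d\theta}\bigl(g(\theta)e^{-i\theta}\bigr) = 0$; specializing to $\theta = \pi$ then yields $e^{i\pi} + 1 = 0$.
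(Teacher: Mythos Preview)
Your proposal is correct and would go through, but it runs in the opposite direction from the paper and uses different machinery at the crucial bridge.

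The paper takes (iii) as the \emph{definition} of $e$, justified by the purely visual observation that all exponential curves $y=a^x$ are horizontal stretches of one another, so exactly one has tangent slope $1$ at its $y$-intercept. From there (iv) is immediate via the factorization $\tfrac{e^{x+h}-e^x}{h}=e^x\cdot\tfrac{e^h-1}{h}$, and the bridge to (i) is obtained by writing $\bigl(1+\tfrac1n\bigr)^n=e^{\,n\log_e(1+1/n)}$, recognizing the exponent as the difference quotient $\tfrac{\log_e(1+1/n)-\log_e 1}{(1+1/n)-1}$ for $\log_e$ at $x=1$, and reading off that this slope is $1$ from the reflection of the picture over the line $y=x$. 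You instead start from (i)/(ii), establish convergence by AM--GM plus a binomial bound, and bridge to (iii)/(iv) via the analytic squeeze $1+x\le e^x\le \tfrac{1}{1-x}$ coming from Bernoulli's inequality.

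What each approach buys: the paper's route is deliberately pictorial and aimed at a Pre-Calculus audience --- the horizontal-stretch argument and the reflection-over-$y=x$ picture replace any explicit convergence proof, which is exactly the pedagogical point being made. Your route is the classical analysis path: it is cleaner to make fully rigorous (your Bernoulli sandwich nails $\lim_{h\to 0}(e^h-1)/h=1$ where the paper defers differentiability of $a^x$ to an appendix), but it front-loads a monotone-bounded limit argument and an algebraic inequality rather than a picture. One soft spot worth flagging in your sketch: the ``rescaling substitution'' from $(1+\tfrac1n)^n\to e$ to $(1+\tfrac{x}{n})^n\to e^x$ is not literally a one-liner for general real $x$ (the substituted index $n/x$ is not an integer), and your functional-equation alternative needs the cross term $xy/n^2$ controlled; both are routine to patch, but they are precisely the kind of step the paper's visual argument is designed to avoid.
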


For example, in United States high schools, it is common to first define
$e$ in Pre-Calculus in the context of compound interest, where expressions
\ref{fact:e:compound-interest} and \ref{fact:e:compound-interest-x} arise,
and where by plugging in large $n$, $e \approx 2.718$ can be approximated
numerically. But then students are told that the \emph{natural}\/ logarithm
is the logarithm that uses that mysterious compound interest number $e$ as
its base, which introduces cognitive dissonance because that doesn't feel
natural at all, especially compared to ``common logarithms'' with the
familiar base 10. Ultimately, many United States high school students (even
those who get A's in math class) simply end up memorizing the other
properties (including the important fact that the derivative of $e^x$ is
itself), without conceptual intuition for why they follow from their
definition of $e$ from compound interest. Figure \ref{fig:implications}
illustrates the implications which have commonly taught, short, and
intuitive proofs.

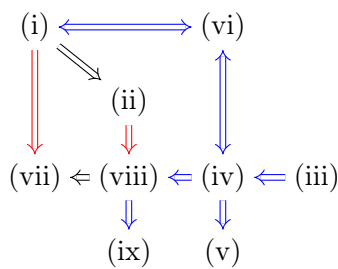
\begin{figure}[htbp]
  \centering
  \begin{tikzpicture}[
      every node/.style={inner xsep=3pt, inner ysep=3pt},
      implies/.style={double equal sign distance, -Implies},
      iff/.style={implies, Implies-Implies},
      x=5mm, y=5mm
    ]

    \node (derivative1) at (7.5, 0) {\ref{fact:e:derivative-1}};
    \node (derivative) at (5, 0) {\ref{fact:e:derivative}};
    \node (differentialequation) at (5, -2) {\ref{fact:e:differential-equation}};
    \node (1xintegral) at (5, 4) {\ref{fact:e:1/x-integral}};
    \node (taylor) at (2.5, 0) {\ref{fact:e:taylor}};
    \node (series) at (0, 0) {\ref{fact:e:series}};
    \node (euler) at (2.5, -2) {\ref{fact:e:euler}};
    \node (compoundinterestx) at (2.5, 2) {\ref{fact:e:compound-interest-x}};
    \node (compoundinterest) at (0, 4) {\ref{fact:e:compound-interest}};

    \draw[implies] (taylor) -- (series);
    \draw[implies] (compoundinterest) -- (compoundinterestx);

    \draw[blue, implies] (derivative1) -- (derivative);
    \draw[blue, implies] (derivative) -- (differentialequation);
    \draw[blue, iff] (derivative) -- (1xintegral);
    \draw[blue, implies] (derivative) -- (taylor);
    \draw[blue, implies] (taylor) -- (euler);
    \draw[blue, iff] (1xintegral) -- (compoundinterest);

    \draw[red, implies] (compoundinterest) -- (series);
    \draw[red, implies] (compoundinterestx) -- (taylor);
  \end{tikzpicture}
  \caption{Commonly taught implications with short and intuitive proofs.
    The black implications are typically taught to Pre-Calculus students,
    the blue implications are typically taught to Calculus students, and
    the red implications are not commonly taught anymore, but were
    historically justified informally hundreds of years ago (e.g., by Jacob
    Bernoulli \cite{Bernoulli1690} when studying compound interest).}
  \label{fig:implications}
\end{figure}

This article contributes a conceptual, intuitive, and visual argument,
which in the space of 3 pages, establishes all of Facts
\factref{compound-interest}, \ref{fact:e:compound-interest-x},
\ref{fact:e:derivative-1}, and \ref{fact:e:derivative}, using only
Pre-Calculus concepts. We start from \ref{fact:e:derivative-1} as the
natural definition of $e$, and deduce everything else in a way that
preserves intuition throughout. The author came up with the teaching
technique while mulling over the question about what was so natural about
$e$, and why \ref{fact:e:compound-interest} and \ref{fact:e:derivative}
were equivalent.  (Of course, there were already other known ways to deduce
one of \ref{fact:e:derivative-1}/\ref{fact:e:derivative} and
\ref{fact:e:compound-interest} from the other, and some are surveyed in
Section \ref{sec:curricula} for comparison.)

The author has been giving talks about this method for many years at math
camps, including, for example, a June 2018 colloquium at Mathcamp (see
\cite{Roberts2010} for a colorful description of Canada/USA Mathcamp). He
observed that the audiences had not seen the particular argument before.
That said, given that the topic of $e$ is encountered by a vast number of
people worldwide, and countless questions about $e$ are asked on Internet
question-and-answer forums, and countless videos exist about $e$ (see Table
\ref{tab:youtube}), the author would be surprised if this article is the
first written account of that 3-page argument.

The objective of this article is to (a) use a visual argument to provide a
self-contained and practically usable teaching reference that ties many
properties of $e$ together naturally and intuitively, in a way that is
suitable for insertion into secondary school curricula; and (b) supply an
open-access reference that is both understandable and mathematically
complete, to unify and settle the many piecemeal online questions and
answers about $e$; while (c) crowdsourcing for references to existing
published literature that contain our visual proof of the equivalence of
\ref{fact:e:compound-interest} and
\ref{fact:e:derivative-1}/\ref{fact:e:derivative}.

This article is organized as follows. The next section contains a complete
exposition which can be used as a teaching reference. We then survey some
existing teaching methods about $e$ in 7 countries (Section
\ref{sec:curricula}), provide a brief historical discussion (Section
\ref{sec:history}), and close with some concluding thoughts about the
purpose of education (Section \ref{sec:conclusion}).

\section{Mathematical exposition: practical lesson plan for teaching why
$\boldsymbol e$ is natural}

\textbf{The most important part of this article is on pages 4--6.} The
mathematics in this section is written at a level of rigor which is meant
for teaching secondary school students. It is designed to maximize
conceptual understanding. At the same time, it is faithful to mathematics,
in the sense that whenever further mathematical rigor is desired in any
particular step, the rigorous explanation is routine for anyone with
background in mathematical analysis, and does not require any unusual
insights. Appendix \ref{sec:analysis}'s proofs confirm that our intuition
is on firm foundation.

The Fact \factref{derivative-1} $\Rightarrow$
\ref{fact:e:compound-interest} derivation (Section
\ref{sec:derivative-1=>compound-interest}) is the key bridge that the
author has not seen written elsewhere in Pre-Calculus language before (See
Section \ref{sec:usa-calculus} for direct references of Calculus textbook
sources from which this could be distilled into Pre-Calculus language, if
one knew to try; as well as commentary on why it is highly non-obvious to
know to try). The other proofs in this article are not original, but are
included for the sake of providing a self-contained teaching reference in
which it is clear that there are no circular dependencies in logic.  The
specific non-original proofs are selected with intentionality to minimize
the amount of pre-requisite knowledge. For example, even the differential
equation in Fact \factref{differential-equation} is handled with only a
pre-requisite of the Mean Value Theorem from introductory Calculus (and no
theory of Differential Equations).

\subsection{Natural definition}
\label{sec:definition}

Geometrically, there really is only one exponential\footnote{We assume
  students have learned that for any positive real $a$ and any rational
  $\frac{m}{n}$, the power $a^{m/n}$ is well-defined as $\sqrt[n]{a^m}$.
  For irrational exponents, most Pre-Calculus students would instinctively
  approximate them with rational exponents. Appendix \ref{sec:analysis} shows
  that this is indeed safe.} function curve shape, because all exponential
function curves $y = a^x$ (with positive real bases $a$) are just
horizontal stretches of each other. This is exactly like how all
ellipses are just stretches of each other (and for the same Pre-Calculus
reason). For example, $y = 8^x$, stretched horizontally by a factor of 6,
is $y = 8^{x/6} = (\sqrt{2})^x$, as in Figure \ref{fig:exponentials}.

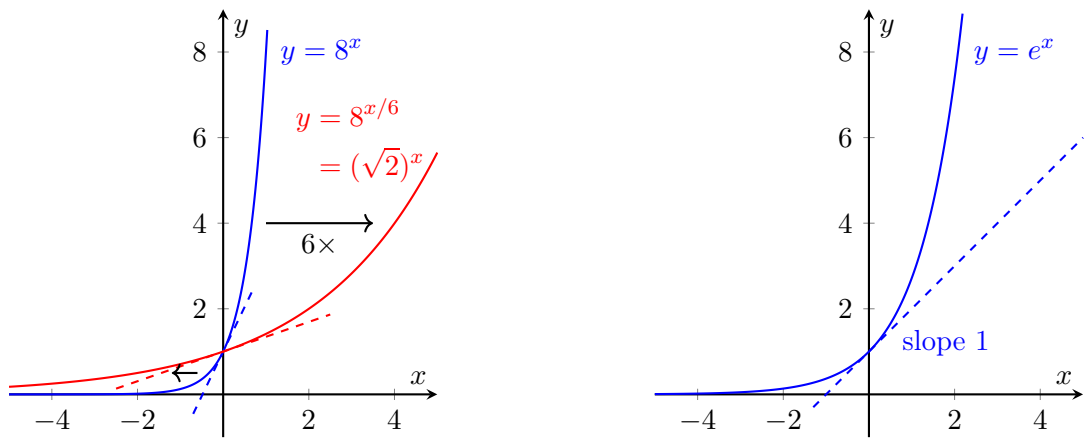
\begin{figure}[htbp]
  \centering
  \begin{minipage}[t]{0.48\textwidth}
    \centering
    \begin{tikzpicture}
      \begin{axis}[
          domain=-5:5,
          ymin=-1, ymax=9,
          restrict y to domain=-1:9,
          axis lines=middle,
          xlabel={$x$}, ylabel={$y$},
          samples=200,
          thick,
          every axis plot/.style={smooth},
          axis equal image
        ]
        \addplot[blue] {8^x};
        \addplot[blue, domain=-0.7:0.7, dashed] {1 + ln(8)*x};

        \addplot[red] {1.414^x};
        \addplot[red, domain=-2.5:2.5, dashed] {1 + ln(1.414)*x};

        \node[blue] at (axis cs:2.3,8) {$y = 8^x$};
        \node[red] at (axis cs:3.2,5.9) {
          $\begin{aligned}
            y &= 8^{x/6} \\
            &= (\sqrt{2})^x
          \end{aligned}$
        };

        \draw[->] (axis cs:1,4) -- (axis cs:3.5,4);
        \node at (axis cs:2.25,3.5) {$6 \times$};
        \draw[->] (axis cs:-0.6,0.5) -- (axis cs:-1.2,0.5);
      \end{axis}
    \end{tikzpicture}
  \end{minipage}
  \hfill
  \begin{minipage}[t]{0.48\textwidth}
    \centering
    \begin{tikzpicture}
      \begin{axis}[
          domain=-5:5,
          ymin=-1, ymax=9,
          restrict y to domain=-1:9,
          axis lines=middle,
          xlabel={$x$}, ylabel={$y$},
          samples=200,
          thick,
          every axis plot/.style={smooth},
          axis equal image
        ]
        \addplot[blue] {exp(x)};
        \addplot[blue, dashed, domain=-1.3:5] {x + 1};

        \node[blue] at (axis cs:3.4,8) {$y = e^x$};
        \node[blue] at (axis cs:1.8,1.2) {slope 1};
      \end{axis}
    \end{tikzpicture}
  \end{minipage}

  \caption{Any horizontal stretch of an exponential curve produces another
  positive real base's exponential curve, so there's a unique positive real
  base whose exponential curve has a tangent line slope of 1 at its
  $y$-intercept.}
  \label{fig:exponentials}
\end{figure}

Different stretch factors produce curves corresponding to different
exponential functions with different positive real bases. Geometrically,
exactly one of these horizontally stretched curves has the property that
its tangent line\footnote{Since students can zoom into the graph, the
  existence of the tangent line at $x = 0$ is actually more believable than
  the existence of the limit of $\big(1 + \frac{1}{n}\big)^n$.  Appendix
  \ref{sec:analysis} includes a formal proof to show there is no circular
reasoning.} at its $y$-intercept has the particularly nice slope of 1.
\textbf{We define $\boldsymbol e$ to be the unique positive real base
corresponding to that curve.}

Section \ref{sec:usa-calculus} shows that some Calculus textbooks have used
this as an informal definition of $e$, but none of the books surveyed in
that section used our horizontal stretch approach to justify the existence
of such a number. An MIT OpenCourseWare video lecture by Jerison
\cite{MITOpenCourseWare2009} does.\footnote{Thanks to Vishal Lama for
finding this reference.}

However, we can go even further (and have not found this next step in the
literature): the horizontal stretch approach also gives an easy way to
numerically approximate $e$. Indeed, of the textbooks that use this
definition of $e$, many already approximate the slope of the tangent line
to $y = 3^x$ at $(0, 1)$ by considering a nearby point $(0.0001,
3^{0.0001})$ and calculating the slope $\frac{3^{0.0001} - 1}{0.0001 - 0}
\approx 1.09867$, which they say is $\approx 1.1$. But they do not point
out that then a stretch in the $x$-direction by a factor of 1.1 will change
the tangent line slope to $\approx \frac{1.1}{1.1} = 1$. Then $y = 3^{x /
1.1} = \big(3^{\frac{10}{11}}\big)^x$ has a tangent slope of about 1. That
already gives the decent approximation $e \approx 3^{\frac{10}{11}} =
\sqrt[11]{3^{10}} \approx 2.715$. Furthermore, if one uses the full 1.09867
instead of 1.1, the resulting approximation is $e \approx 3^{1/1.09867}
\approx 2.71814$, which is very close to 2.71828. \textbf{The author
recommends adding this brief calculation to every textbook which makes a
similar definition of $\boldsymbol e$.}

The same type of slope calculation immediately derives perhaps the most
important fact about $e$, which Calculus students know as $\frac{d}{dx} e^x
= e^x$.

\medskip

\noindent \textbf{Fact \factref{derivative}:} The slope of the tangent line
to $y = e^x$ at any point $(x, e^x)$ is just $e^x$.

\begin{proof}
  Approximate the tangent line at $(x, e^x)$ with the line that passes
  through both $(x, e^x)$ and another point very nearby on the graph,
  $(x+h, e^{x+h})$, where $h$ is a real number approaching 0. The slope is
  \begin{equation}
    \frac{e^{x+h} - e^x}{(x+h) - x}
    =
    e^x \cdot \left(\frac{e^h - e^0}{h - 0} \right).
    \label{eq:derivative1=>derivative}
  \end{equation}
  The final bracket is the slope of the line through $(0, 1)$ and another
  nearby point $(h, e^h)$ on $y = e^x$. As $h$ approaches 0, this slope
  approaches the slope of the tangent line to $y = e^x$ at $x = 0$. That is
  1, by our definition of $e$, and so the slope of the tangent line at $(x,
  e^x)$ is $e^x \cdot 1 = e^x$, as desired.
\end{proof}

\subsection{Interlude: pedagogical commentary about ``natural''}
\label{sec:natural}

This is an opportunity to explain what a \emph{natural}\/ definition is.
It is true that the limit of $\big(1 + \frac{1}{n}\big)^n$ naturally arises
from the exploration of compound interest (typically credited to Bernoulli
\cite{Bernoulli1690}). That usage of the word ``natural'' means that while
writing down formulas for compounding interest over $n$ periods, one is
likely to encounter this expression.

However, to use this adjective to describe the logarithm as natural is not
appropriate without a deeper justification. Logarithms are not
fundamentally about interest, and anyway, our society's definition of
compound interest is not natural at all. It is an oddity that some humans
historically defined ``interest at 12\% per annum, compounded monthly'' to
mean that after every month (which may have 28, 29, 30, or 31 days), 1\%
interest is added (see \cite{Lewin2019} for a survey article about the
emergence of compound interest).

On the other hand, it is natural to study exponential functions. As
explained in the previous section, they are all the same curve, just
stretched horizontally by different factors. Which exponential curve is the
most special? Definitely not $10^x$, because the only significance of 10 is
that human beings have 10 fingers.

The slope of the tangent line is a geometrical way to distinguish between
exponential curves. Where? The choice of the $y$-intercept is natural
(there is no $x$-intercept anyway). Why a slope of 1? The only simpler
number is 0, but the question of which positive real base produces an
exponential curve whose tangent slope at its $y$-intercept is 0 has an
uninteresting answer: only $y = 1^x$ satisfies that property, and this
would just give us an overly-complicated re-definition of the number 1.
This leads us to the next-most-simple number to consider for the slope (1),
which gives us the definition of the most natural exponential (and hence
most natural logarithm). As a beautiful consequence, the bracket in
equation \eqref{eq:derivative1=>derivative} tends to 1 and vanishes as a
multiplicative factor.

\subsection{Cute bridge}
\label{sec:derivative-1=>compound-interest}

In this section, we will prove that the familiar Pre-Calculus compound
interest expression $\big( 1 + \frac{1}{n} \big)^n$ approaches the same
number $e$ which we just defined in the previous section, i.e., Fact
\factref{derivative-1} $\Rightarrow$ \ref{fact:e:compound-interest}. We
will also get \ref{fact:e:compound-interest-x} essentially for free. The
first routine step is to apply a standard\footnote{This step relies on
  facts that are typically already proven in a section before $e$ is
  defined. For completeness (and to see that there is no circular
  dependence on $e$): equation \eqref{eq:a^b=e^(blna)} follows from the
  facts that for any positive real base $b$, the function $\log_b x$ is
  the inverse function of $b^x$ (see Appendix \ref{sec:analysis}), and
  $(b^c)^n = b^{cn}$.} and often-used method for transforming an arbitrary
power into an exponential of a logarithm, which is independent of the
concept of $e$: $a^n = b^{n \log_b a}$ for any positive real numbers $a$
and $b$, and any real number $n$.
\begin{equation}
  \left(1 + \frac{1}{n} \right)^n
  =
  \left(e^{\log_e \left(1 + \frac{1}{n} \right)} \right)^n
  =
  e^{n \log_e \left(1 + \frac{1}{n}\right)}.
  \label{eq:a^b=e^(blna)}
\end{equation}

We used base $b = e$ (instead of, say, 10) because now it conveniently
suffices to show that the exponent tends to 1 as $n$ grows. The first key
insight is to rearrange the exponent:

\begin{equation}
  n \log_e \left(1 + \frac{1}{n}\right)
  =
  \frac{\log_e \left(1 + \frac{1}{n}\right)}{\frac{1}{n}}
  =
  \frac{\log_e \left(1 + \frac{1}{n}\right) - \log_e (1)}{\left(1 +
    \frac{1}{n}\right) - (1)}.
  \label{eq:nln->derivative}
\end{equation}

That last quantity is the slope of the line between the point $(1, 0)$ on
the curve $y = \log_e x$ and another point very nearby on the curve. As $n$
grows, that tends to the slope of the tangent line at $(1, 0)$. We are done
as soon as we prove that slope is 1 (which is also a natural objective to
seek). The second key insight is geometrically illustrated in Figure
\ref{fig:reflect}. Since $\log_e x$ is the inverse function of $e^x$, the
graph of $y = \log_e x$ is the reflection of $y = e^x$ over the line $y =
x$. But both the tangent line to $y = e^x$ through $(0, 1)$ and the line $y
= x$ have slope 1, hence are parallel, and so when everything is reflected
over $y = x$, the tangent line to $y = \log_e x$ through $(1, 0)$ is still
parallel with slope 1 as well, \textbf{completing the proof of Fact
  \factref{compound-interest}!}  \hfill $\Box$

\medskip

\begin{figure}[htbp]
  \centering
  \begin{minipage}[t]{0.48\textwidth}
    \centering
    \begin{tikzpicture}
      \begin{axis}[
          domain=-5:5,
          ymin=-5, ymax=5,
          restrict y to domain=-5:5,
          axis lines=middle,
          xlabel={$x$}, ylabel={$y$},
          samples=400,
          thick,
          every axis plot/.style={smooth},
          axis equal image,
          xtick=\empty,
          ytick=\empty
        ]
        \addplot[blue] {exp(x)};
        \addplot[blue, dashed] {x + 1};
        \draw[blue, thick] (axis cs:-4.0,-3.0) -- (axis cs:-4.3,-3.0);
        \draw[blue, thick] (axis cs:-4.0,-3.0) -- (axis cs:-4.0,-3.3);
        \draw[blue, thick] (axis cs:-4.2,-3.2) -- (axis cs:-4.5,-3.2);
        \draw[blue, thick] (axis cs:-4.2,-3.2) -- (axis cs:-4.2,-3.5);

        \addplot[black, dotted] {x};
        \draw[thick] (axis cs:-3.5,-3.5) -- (axis cs:-3.8,-3.5);
        \draw[thick] (axis cs:-3.5,-3.5) -- (axis cs:-3.5,-3.8);
        \draw[thick] (axis cs:-3.7,-3.7) -- (axis cs:-4.0,-3.7);
        \draw[thick] (axis cs:-3.7,-3.7) -- (axis cs:-3.7,-4.0);

        \draw[red, thick, ->]
        (axis cs:1.7929,3.2071) arc[start angle=135, end angle=-45, radius=1];
        \node[red] at (axis cs:3,1) {reflect};

        \node[blue] at (axis cs:-4,0.5) {$y = e^x$};
        \node[blue] at (axis cs:-4,-1.5) {slope 1};
        \node at (axis cs:-2.6,-4.4) {slope 1};
      \end{axis}
    \end{tikzpicture}
  \end{minipage}
  \hfill
  \begin{minipage}[t]{0.48\textwidth}
    \centering
    \begin{tikzpicture}
      \begin{axis}[
          domain=-5:5,
          ymin=-5, ymax=5,
          restrict y to domain=-5:5,
          axis lines=middle,
          xlabel={$x$}, ylabel={$y$},
          samples=400,
          thick,
          every axis plot/.style={smooth},
          axis equal image,
          xtick=\empty,
          ytick=\empty
        ]
        \addplot[blue] {exp(x)};
        \addplot[blue, dashed] {x + 1};
        \draw[blue, thick] (axis cs:-4.0,-3.0) -- (axis cs:-4.3,-3.0);
        \draw[blue, thick] (axis cs:-4.0,-3.0) -- (axis cs:-4.0,-3.3);
        \draw[blue, thick] (axis cs:-4.2,-3.2) -- (axis cs:-4.5,-3.2);
        \draw[blue, thick] (axis cs:-4.2,-3.2) -- (axis cs:-4.2,-3.5);

        \addplot[black, dotted] {x};
        \draw[thick] (axis cs:-3.5,-3.5) -- (axis cs:-3.8,-3.5);
        \draw[thick] (axis cs:-3.5,-3.5) -- (axis cs:-3.5,-3.8);
        \draw[thick] (axis cs:-3.7,-3.7) -- (axis cs:-4.0,-3.7);
        \draw[thick] (axis cs:-3.7,-3.7) -- (axis cs:-3.7,-4.0);

        \addplot[red] {ln(x)};
        \addplot[red, dashed] {x - 1};
        \draw[red, thick] (axis cs:-3.0,-4.0) -- (axis cs:-3.3,-4.0);
        \draw[red, thick] (axis cs:-3.0,-4.0) -- (axis cs:-3.0,-4.3);
        \draw[red, thick] (axis cs:-3.2,-4.2) -- (axis cs:-3.5,-4.2);
        \draw[red, thick] (axis cs:-3.2,-4.2) -- (axis cs:-3.2,-4.5);

        \node[blue] at (axis cs:-4,0.5) {$y = e^x$};
        \node[red] at (axis cs:1.8,-3) {$y = \log_e x$};
        \node[blue] at (axis cs:-4,-1.5) {slope 1};
        \node[red] at (axis cs:-1.9,-4.7) {slope 1};
      \end{axis}
    \end{tikzpicture}
  \end{minipage}

  \caption{Geometric intuition for why $y = \log_e x$ has tangent
  slope 1 at $(1, 0)$.}
  \label{fig:reflect}
\end{figure}
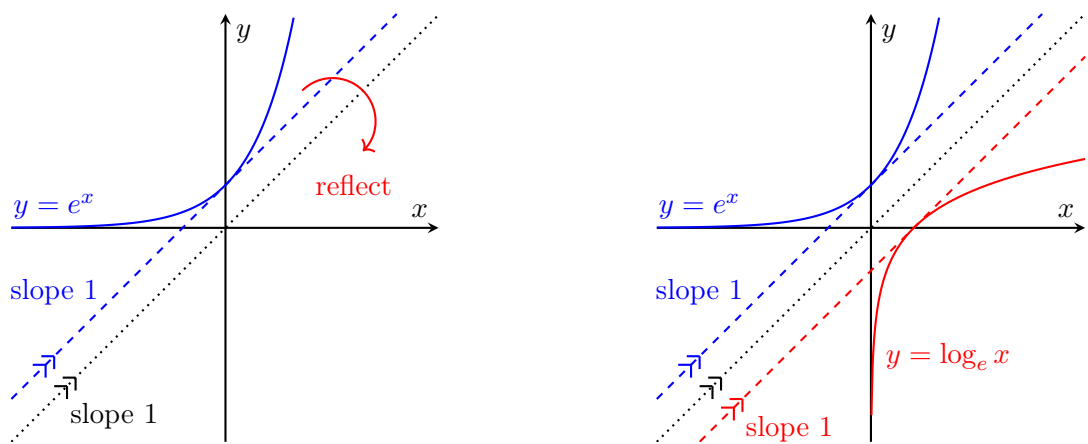

The same argument actually proves that for any fixed real number $x$, as
$n$ grows, the expression $\big( 1 + \frac{x}{n} \big)^n$ approaches $e^x$:
\begin{align*}
  \left(1 + \frac{x}{n} \right)^n
  &=
  e^{n \log_e \left(1 + \frac{x}{n}\right)} \\
  n \log_e \left(1 + \frac{x}{n}\right)
  &=
  x \cdot \left( \frac{\log_e \left(1 + \frac{x}{n}\right)}{\frac{x}{n}} \right)
  =
  x \cdot \left(\frac{\log_e \left(1 + \frac{x}{n}\right) - \log_e (1)}{\left(1 +
    \frac{x}{n}\right) - (1)}\right).
\end{align*}
Indeed, the final bracket still tends to the tangent slope of $y = \log_e
x$ at $x = 1$, because as $n \to \infty$, $\frac{x}{n} \to 0$. So, the
final bracket still tends to 1. Therefore, $\big( 1 + \frac{x}{n} \big)^n$
approaches $e^{x \cdot 1} = e^x$, \textbf{proving Fact
  \factref{compound-interest-x}.} \hfill $\Box$

\subsection{Calculus consequences}

\label{sec:derivative-group}

The next proof necessarily uses the language of differential equations,
because the statement is about a differential equation. Fortunately, we can
skip the theoretically-heavy proof of solution existence because Fact
\factref{derivative} already established that $f(x) = e^x$ is a solution to
the differential equation $f'(x) = f(x)$ with initial condition $f(0) = e^0
= 1$.

\medskip

\noindent \textbf{Fact \factref{differential-equation}:} $y = e^x$ is the
solution to the differential equation $y' = y$ with initial condition $y(0)
= 1$.

\begin{proof}
  Note that this statement uses the definite article ``the'' instead of the
  indefinite article ``a''. So, we must prove two claims: $e^x$ is a
  solution (which was immediate from the previous fact), and also there are
  no other solutions.

  The fact that there are no other solutions follows from a standard
  uniqueness argument, which only requires the Mean Value Theorem, and can
  therefore be taught to students in introductory Calculus (no need for a
  Differential Equations class). To that end, consider any other function
  $g(x)$ which also satisfies $g'(x) = g(x)$ with initial condition $g(0) =
  1$. Then their difference $h(x) = f(x) - g(x)$ satisfies the differential
  equation $h'(x) = 0$ with initial condition $h(0) = 1 - 1 = 0$.

  We will prove that $h(x) = 0$ everywhere. Indeed, suppose for the sake of
  contradiction that there exists a real number $c$ such that $h(c) \neq
  0$. Then the Mean Value Theorem for derivatives implies that there is a
  number $b$ between 0 and $c$, for which the derivative satisfies
  \[
    h'(b) = \frac{h(c) - h(0)}{c - 0} = \frac{h(c)}{c} \neq 0,
  \]
  contradicting the fact that $h'(x) = 0$ everywhere. Therefore, for every
  $x$, we have $f(x) - g(x) = h(x) = 0$, and so $f$ and $g$ are the same;
  hence there is no other solution.
\end{proof}

\medskip

\noindent \textbf{Fact \factref{1/x-integral}:} $\displaystyle \int_1^x
\frac{dt}{t} = \log_e x$.

\begin{proof}
  It suffices to prove that the derivative of $\log_e x$ is $\frac{1}{x}$.
  Consider an arbitrary point $(a, b)$ on the curve $y = \log_e x$. By
  symmetry about the line $y = x$ (see Figure
  \ref{fig:inverse-derivative}), the slope of that tangent at $(a, b)$ and
  the slope of the tangent to $y = e^x$ at $(b, a)$ are reciprocals of each
  other.

  We have already proven that the derivative of $e^x$ is itself, so the
  slope of the tangent to $y = e^x$ at $(b, a)$ is $a$, and therefore the
  slope of the tangent to $y = \log_e x$ at $(a, b)$ is $\frac{1}{a}$, as
  desired.

  \begin{figure}[htbp]
    \centering
    \begin{tikzpicture}
      \begin{axis}[
          domain=-2.5:7.5,
          ymin=-2.5, ymax=7.5,
          restrict y to domain=-2.5:7.5,
          axis lines=middle,
          xlabel={$x$}, ylabel={$y$},
          samples=400,
          thick,
          every axis plot/.style={smooth},
          axis equal image,
          xtick=\empty,
          ytick=\empty
        ]
        \addplot[blue] {exp(x)};
        \addplot[blue, dashed] {e*(x-1) + e};

        \draw[blue, dotted] (axis cs:1.5,4.0774) -- (axis cs:2.5,4.0774) --
        (axis cs:2.5,6.7957);

        \fill[blue] (axis cs:1,2.718) circle[radius=2pt];

        \node[blue] at (axis cs:2.1,2.718) {$(b, a)$};
        \node[blue] at (axis cs:2,3.6) {1};
        \node[blue] at (axis cs:2.9,5.4351) {$a$};

        \addplot[black, dotted] {x};

        \addplot[red] {ln(x)};
        \addplot[red, dashed] {(1/e)*(x-e) + 1};

        \draw[red, dotted] (axis cs:4.0774,1.5) -- (axis cs:4.0774,2.5) --
        (axis cs:6.7957,2.5);

        \fill[red] (axis cs:2.718,1) circle[radius=2pt];

        \node[red] at (axis cs:3.7,0.5) {$(a, b)$};
        \node[red] at (axis cs:3.7,2) {1};
        \node[red] at (axis cs:5.4351,2.9) {$a$};

        \node[blue] at (axis cs:-1.4,0.9) {$y = e^x$};
        \node[red] at (axis cs:1.9,-1.7) {$y = \log_e x$};
      \end{axis}
    \end{tikzpicture}

    \caption{Derivatives of inverse functions. The right triangles with
      legs $a$ and 1 are mirror images of each other, and so the slopes of
      the red tangent line to $y = \log_e x$ and the blue tangent line to
      $y = e^x$ are reciprocals of each other (the ``rise'' and ``run''
      switch places). This argument is a particular instance of the general
      proof that $(f^{-1}(x))' = \frac{1}{f'(f^{-1}(x))}$.}
    \label{fig:inverse-derivative}
  \end{figure}
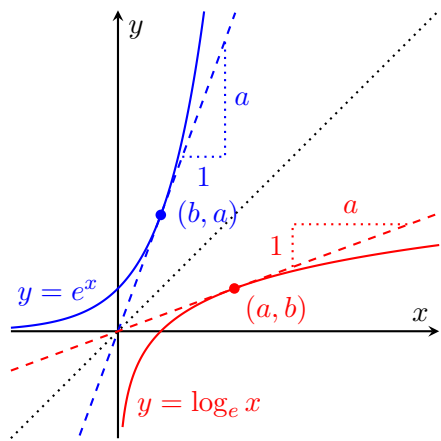
\end{proof}

The value of the tangent-slope-of-1 definition of $e$ is that all of these
derivative-related properties follow relatively elegantly and intuitively,
using minimal prerequisite knowledge. No prior Calculus knowledge is
required to establish the derivative of $e^x$, and no Differential
Equations theory (only the Mean Value Theorem from introductory Calculus)
is required to ensure that the solution to the differential equation $y' =
y$ with $y(0) = 1$ exists.

\subsection{Taylor Series}

For students who have learned Taylor Series, the conceptually easiest way
to deduce the remaining properties in Fact \ref{fact:e} is to use the
now-proven fact that all derivatives of $e^x$ are $e^x$, all of which
evaluate to 1 at $x = 0$, and so the Taylor Series Theorem applied to $e^x$
\textbf{proves Fact \factref{taylor}:}
\[
  e^x = \frac{1}{0!} + \frac{x}{1!} + \frac{x^2}{2!} + \frac{x^3}{3!} + \cdots
\]
The series is absolutely convergent for all $x$ by the Ratio Test, because
no matter what $x$ is, the ratio of the $(n+1)$-st term divided by the
$n$-th term is $\frac{x}{n+1} \to 0$ as $n \to \infty$.  Evaluating at $x =
1$ \textbf{produces Fact \factref{series}:}
\[
  e = \frac{1}{0!} + \frac{1}{1!} + \frac{1}{2!} + \frac{1}{3!} + \cdots
\]

Fact \factref{euler} considers $e^{i \theta}$. Previously, we had only
defined exponentials with real powers. The standard definition of a
complex-power exponential is the evaluation of the
everywhere-absolutely-convergent series for $e^x$:
\begin{align*}
  e^{i \theta}
  &=
  \frac{1}{0!} + \frac{i \theta}{1!} + \frac{(i \theta)^2}{2!} + \frac{(i
  \theta)^3}{3!} + \cdots \\
  &=
  \left(\frac{1}{0!} - \frac{\theta^2}{2!} + \frac{\theta^4}{4!} +
  \cdots\right)
  +
  i \left(\frac{\theta}{1!} - \frac{\theta^3}{3!} + \frac{\theta^5}{5!} +
  \cdots\right),
\end{align*}
where the rearrangement of terms is permitted because the series converges
absolutely.

Calculus students learn that when angles are expressed in radians, the
successive derivatives of $\sin x$ are $\cos x$, $-\sin x$, $-\cos x$, and
then $\sin x$ again, etc. Therefore, the Taylor Series of $\sin x$ and
$\cos x$ are:
\begin{align*}
  \sin x &= \frac{x}{1!} - \frac{x^3}{3!} + \frac{x^5}{5!} + \cdots \\
  \cos x &= \frac{1}{0!} - \frac{x^2}{2!} + \frac{x^4}{4!} + \cdots \\
\end{align*}
Combining everything \textbf{proves Fact \factref{euler}:}
\[
  e^{i \theta} = \cos \theta + i \sin \theta.
\]
\hfill $\Box$

\noindent \textbf{Remark.} Many students wonder why mathematicians like
radians, because $90^\circ$ looks nicer to them than $\frac{\pi}{2}$
radians. It turns out that radians are actually the most natural way to
measure angles, in a sense deeply analogous to why $e$ is natural: the
derivative of the basic trigonometric function $\sin x$ is cleanly $\cos x$
with no multiplicative factor only when the angle is measured in radians.
Indeed, only when using radians, the derivative of $\sin x$ at 0 is
\[
  \lim_{h \to 0} \frac{\sin h}{h} = 1,
\]
which is known as the \textit{первый замечательный предел}\/ (``first
remarkable limit'') in Russian. It is because both $\sin x$ and $e^x$ have
derivative 1 at $x = 0$ that Euler's beautiful equation can be true.

On the other hand, there is nothing universally fundamental about 360, the
number of degrees in a full rotation. It just happens to be a number which
is both close to the number of days in an Earth year, and also divisible by
many numbers.

\subsection{Avoiding Taylor Series}

While Taylor Series are a convenient way to deduce Facts \factref{series}
and \ref{fact:e:taylor}, students are sometimes told the infinite series
for $e$ as historical trivia in Pre-Calculus class, long before they learn
Taylor Series. This section provides an alternative derivation of those
facts which avoids advanced Calculus.

Indeed, Bernoulli's investigation into compound interest
\cite{Bernoulli1690}, published in 1690, stated\footnote{Bernoulli wrote:
  if an amount $a$ is borrowed at an annual interest amount of $b$
  (interest was given as a total amount, not as a rate, but that
  corresponds to an interest rate of $\frac{b}{a}$), then in the limit of
  continuous compounding over the year, the amount due is $a + b +
  \frac{b^2}{2a} + \frac{b^3}{2 \cdot 3 a^2} + \frac{b^4}{2 \cdot 3 \cdot 4
  a^3} + \frac{b^5}{2 \cdot 3 \cdot 4 \cdot 5 a^4}$. When $a = 1$, this is
  exactly the series $\frac{1}{0!} + \frac{b}{1!} + \frac{b^2}{2!} +
\cdots$.} (without showing work) that the limit of continuously compounding
interest was the series $\frac{1}{0!} + \frac{x}{1!} + \frac{x^2}{2!} +
\cdots$. He couldn't have used Taylor Series because Taylor only published
his theorem in 1715 \cite{Taylor1715}. Instead, it looks like he used the
Binomial Theorem, which is a purely Algebraic / Combinatorial result, that
pre-dates logarithms, Calculus, and $e$ by centuries:
\[
  (a + b)^n
  =
  \binom{n}{0} a^n b^0 + \binom{n}{1} a^{n-1} b^1 + \cdots + \binom{n}{n}
  a^0 b^n,
\]
where for non-negative integers $n$ and $k$,
\[
  \binom{n}{k}
  =
  \frac{(n)(n-1)(n-2) \cdots (n-k+1)}{k!}.
\]
There are exactly $k$ factors $(n)$, $(n-1)$, \ldots, $(n-k+1)$ in the
final numerator. 

\medskip

\noindent \textbf{Fact \factref{series}: $\displaystyle e = \frac{1}{0!} +
\frac{1}{1!} + \frac{1}{2!} + \cdots$.}

\begin{proof}[Intuitive ``proof.'']
  Applying the Binomial Theorem, we find:
  \[
    \left( 1 + \frac{1}{n} \right)^n
    =
    1
    + n \cdot \frac{1}{n}
    + \frac{(n)(n-1)}{2!} \cdot \frac{1}{n^2}
    + \frac{(n)(n-1)(n-2)}{3!} \cdot \frac{1}{n^3}
    + \cdots
    + \frac{(n)(n-1)(n-2)\cdots(1)}{n!} \cdot \frac{1}{n^n}.
  \]
  The first term equals $\frac{1}{0!}$, and the second term equals
  $\frac{1}{1!}$. The third term equals:
  \[
    \frac{(n)(n-1)}{2!} \cdot \frac{1}{n^2}
    =
    \frac{1}{2!} \cdot \left( \frac{n}{n} \cdot \frac{n-1}{n} \right) \\
  \]
  Observe that as $n \to \infty$, this term individually converges to
  $\frac{1}{2!}$. Similarly, the next term equals $\frac{1}{3!} \cdot \big(
  \frac{n}{n} \cdot \frac{n-1}{n} \cdot \frac{n-2}{n} \big) \to
  \frac{1}{3!}$ as $n \to \infty$, etc. These are the individual
  $\frac{1}{k!}$ terms of the desired series $\frac{1}{0!} + \frac{1}{1!} +
  \frac{1}{2!} + \cdots$, so it looks like the result is true.
\end{proof}

This intuition will likely already satisfy students who wonder what the
relation is between $S = \frac{1}{0!} + \frac{1}{1!} + \frac{1}{2!} +
\cdots$, $\big( 1 + \frac{1}{n} \big)^n$, and the derivative of $e^x$ being
itself. For example, this shows that the factorials in the infinite series
for $e$ arise naturally from the Binomial coefficients.  When teaching
secondary school students, it is reasonable to consider this amount of
justification to be sufficient, and move on. So that the educator can have
the confidence that no errors have been swept under the
rug,\footnote{$\lim_{n \to \infty} \sum_{k=0}^n a_{n,k}$ does not always
equal $\sum_{k=0}^n \lim_{n \to \infty} a_{n,k}$.  A classic counterexample
is when $a_{n,k} = 0$ for all $k \neq n$, but all $a_{n,n} = 1$.} a
rigorous $\epsilon$-$N$ proof of convergence follows.

\begin{proof}[Rigorous completion of proof.]
  Denote:
  \[
    a_{n,k}
    =
    \frac{1}{k!} \cdot \left( \frac{n}{n} \cdot \frac{n-1}{n} \cdots
    \frac{n-k+1}{n} \right).
  \]
  We need to show that the sequence of sums $S_n = \sum_{k=0}^n a_{n,k}$
  converges to the absolutely convergent series $S$ as $n \to \infty$. For
  every $n$, the difference satisfies
  \[
    |S - S_n|
    =
    \left|
    \sum_{k=0}^n \left(\frac{1}{k!} - a_{n,k}\right)
    +
    \sum_{k=n+1}^\infty \frac{1}{k!}
    \right|
    \leq
    \sum_{k=0}^n \left|\frac{1}{k!} - a_{n,k}\right|
    +
    \sum_{k=n+1}^\infty \frac{1}{k!}
  \]
  Conveniently, $\big|\frac{1}{k!} - a_{n,k}\big| \leq \frac{1}{k!}$, so
  for every positive integer $m \leq n$, it is also true that
  \begin{equation}
    |S - S_n|
    \leq
    \sum_{k=0}^m \left|\frac{1}{k!} - a_{n,k}\right|
    +
    \sum_{k=m+1}^\infty \frac{1}{k!}.
    \label{eq:series-error}
  \end{equation}
  Now, consider an arbitrary real number $\epsilon > 0$. There exists an
  integer $N_1$ for which $\sum_{k=N_1+1}^\infty \frac{1}{k!} <
  \frac{\epsilon}{2}$. Next, for each fixed $k$, $\lim_{n \to \infty}
  a_{n,k} \to \frac{1}{k!}$, so there exists a number $N_2$ such that for
  each $n > N_2$ and $0 \leq k \leq N_1$, it always holds that
  $\big|\frac{1}{k!} - a_{n,k}\big| < \frac{\epsilon}{2 (N_1+1)}$. Then,
  using $m = N_1$ in \eqref{eq:series-error}, for every $n > \max(N_1,
  N_2)$:
  \[
    |S - S_n|
    <
    (N_1 + 1) \cdot \frac{\epsilon}{2 (N_1 + 1)}
    +
    \frac{\epsilon}{2}
    =
    \epsilon,
  \]
  which according to the $\epsilon$-$N$ definition of the limit, proves
  that $S_n \to S$.
\end{proof}

\medskip

\noindent \textbf{Remark.} The same argument shows that $\big(1 +
\frac{x}{n}\big)^n \rightarrow e^x$ implies $e^x = \frac{1}{0!} +
\frac{x}{1!} + \frac{x^2}{2!} + \cdots$, providing a Taylor-Series-free
\textbf{proof of Fact \factref{taylor}.}

\section{Discussion}

\subsection{Existing secondary school (and university) curricula}
\label{sec:curricula}

In this section, we survey secondary school mathematical curricula from the
United States and a few selected countries. The author confesses that he is
mostly familiar with the United States education system and the English
language, and so the coverage of other countries is very incomplete. He
found these books by going to numerous university libraries and browsing
entire shelves, which is why English-language books are heavily represented
in this section.

After browsing all of these books, the author developed a deep respect for
people who write these comprehensive textbooks. This section is not meant
to insult their monumental work in any way, as those authors and editors
made countless pedagogical innovations and tradeoffs throughout the
hundreds of written pages that constitute each of these books. Instead, the
purpose of this section is to summarize the state of affairs, with the
intention of collaboratively updating the educational methods used all
around the world.

\subsubsection{United States: Pre-Calculus}
\label{sec:usa-precalculus}

In the United States, it is customary to teach about derivatives and
integrals only in Calculus, which students take either in their final years
of high school, or in university, if at all. The number $e$ is typically
first defined in a class called Pre-Calculus, Algebra 2, or Algebra with
Trigonometry, around the time that students learn about exponential and
logarithmic functions. This section reviews 20 such Pre-Calculus books with
publication years ranging from 1916--2021.

Books from the early 20th century focus on the computational aspect of
logarithms. Indeed, the 1920 book of Cracknell \cite{Cracknell1920} opens
its section with the statement ``logarithms are indices used for purposes
of rapid calculation.'' Other books such as Clapham \cite{Clapham1916} and
Breslich \cite{Breslich1917} are similar, and all three of them have
extensive instruction in how to use logarithm tables and slide rules. The
first two books only work with base-10 logarithms without introducing $e$
at all, and the third only mentions $e$ in passing without a proper
definition.

The author found textbooks from the 1970's (Crowdis and Wheeler
\cite{CrowdisWheeler1976} which still includes logarithm tables, and
Stockton \cite{Stockton1978}) and 1980's (Paul and Haeussler
\cite{PaulHaeussler1983}). By the 1990's, the American math textbook market
had standardized and consolidated around series which produced edition
after edition, such as Blitzer \cite{Blitzer2001}, Connally et al.\
\cite{ConnallyHughesHallettGleason2015}, Glencoe/McGraw-Hill
\cite{Glencoe2011Precalculus}, Larson \cite{Larson2018}, Lial et al.\
\cite{LialHornsbySchneiderDaniels2017}, Stewart et al.\
\cite{StewartRedlinWatson2006}, and Sullivan \cite{Sullivan2002,
Sullivan1999}. In more recent years, open educational resources emerged,
such as the OpenStax series \cite{Abramson2021AlgebraTrig,
Abramson2021Precalculus}. Some unconventional series also emerged with a
focus on math contests, such as the Art of Problem Solving books
\cite{Rusczyk2009, RusczykCrawford2008}.

All of the above Pre-Calculus books from the 1970's onward take roughly the
same approach. They all state that $e$ is approximately 2.718 (sometimes
with more digits). Many of them do supply a precise definition of $e$, and
those that do, use the limit of $\big( 1 + \frac{1}{n} \big)^n$. Then, most
of them use this limit definition to prove the prove the formula $e^{rt}$
for the multiplicative factor of continuously compounded interest at
interest rate $r$ over time duration $t$. Most of them state that $e$ is
important in Calculus and scientific applications, without providing
particularly tangible examples that intrinsically require $e$ to be the
base.  They eventually proceed to define the natural logarithm $\log_e$
with base $e$. None of them state anything about slopes of tangent lines to
$e^x$.  Some books remark that in Calculus many other wonderful properties
will be proven, without mentioning particulars.

In rare cases, there is variation. A Pre-Calculus textbook by Axler
\cite{Axler2016} takes a different approach which is often used in Calculus
books, defining $e$ to be the number such that the area under the curve $y
= \frac{1}{x}$ from $1 \leq x \leq e$ is exactly 1. However, since Axler's
book is at the Pre-Calculus level, it does not define the Riemann integral,
and instead presents approximations without proofs. It provides a sketch of
a justification to deduce the formula $e^{rt}$ for continuously compounded
interest, but has similar approximation inaccuracy issues as in the
argument presented later in Section \ref{sec:germany}.

The closest Pre-Calculus book to our treatment is the 1997 book of Cohen
\cite{Cohen1997}. It actually provides two different definitions of $e$:
first, as the limit of $\big(1 + \frac{1}{n}\big)^n$, and later as the base
of the exponential with tangent slope of 1 at $x = 0$. However, it only
states that ``it's certainly not obvious'' that the two definitions are
equivalent, and does not provide proof.

\subsubsection{United States: Calculus}
\label{sec:usa-calculus}

On the other hand, a survey of some United States (or more broadly,
English-language) Calculus and university mathematics textbooks shows more
proofs. This section reviews 53 books with publication years spanning from
1908--2023. Some applied Calculus books (e.g.,
\cite{BarnettZieglerByleen2014, FinneyDemanaWaitsKennedy2012,
LarsonEdwards2019Early, LarsonHodgkins2013, LialGreenwellRitchey2015,
LialGreenwellRitchey2011}) use the same limit definition of $e$ as in the
previous Pre-Calculus section, and then focus on applications instead of
rigorously proving that $e^x$ is its own derivative. The remainder of this
section discusses books with a bit more theory.

As with the Pre-Calculus textbooks in the previous section, by the late
20th century, the textbook industry ballooned into several popular series
which published edition after edition, coalescing around a few common
approaches.  There are a few outliers like Blakey's 1949 book
\cite{Blakey1949} which defines the general exponential function as
$\lim_{n \to \infty} \big(1 + \frac{x}{n}\big)^n$, and Landau's 1950 book
\cite{Landau1950} which defines the natural logarithm as $\log x = \lim_{n
\to \infty} 2^n \big(x^{1/2^n} - 1\big)$.

For most of the early- and mid-20th century, the following two approaches
were most common. One of them defines $e$ as one of the two equivalent
limits $\lim_{h \to 0} (1 + h)^{1/h} = \lim_{n \to \infty} \big(1 +
\frac{1}{n}\big)^n$, and then deduces the derivative of the base-$e$
logarithm like this:
\begin{align*}
  \frac{d}{dx} \log_e x
  &=
  \lim_{h \to 0} \frac{\log_e (x + h) - \log_e x}{h} 
  =
  \lim_{h \to 0} \frac{1}{x} \cdot \frac{\log_e \frac{x + h}{x}}{\frac{h}{x}}
  =
  \frac{1}{x} \cdot \lim_{h \to 0} \frac{\log_e \left(1 +
    \frac{h}{x}\right)}{\frac{h}{x}} \\
  &=
  \frac{1}{x} \cdot \lim_{h \to 0} \log_e \left(1 +
  \frac{h}{x}\right)^{\frac{1}{h/x}}
  =
  \frac{1}{x} \cdot \log_e \lim_{h \to 0} \left(1 +
  \frac{h}{x}\right)^{\frac{1}{h/x}}
  =
  \frac{1}{x} \cdot \log_e e
  =
  \frac{1}{x}.
\end{align*}

Then, the derivative of $e^x$ follows from the inverse function derivative
theorem because $e^x$ is the inverse function of $\log_e x$, and the rest
of the theory can be established. This is roughly the approach used in the
1908 book by Osborne \cite{Osborne1908}, the 1922 book by Osgood
\cite{Osgood1922}, and the 1956 book by Morrill \cite{Morrill1956}. It is
also used by many books which eventually had many editions, such as
Berresford and Rockett \cite{BerresfordRockett2016, BerresfordRockett2012},
Edwards and Penney \cite{EdwardsPenney1986}, Grossman \cite{Grossman1977},
Waner and Costenoble \cite{WanerCostenoble2023}, and Washington and Evans
\cite{WashingtonEvans2023}.

Another common approach is to define the natural logarithm as the integral
$\log x = \int_1^x \frac{dt}{t}$, to overcome the issue of irrational
exponents (which we discuss in Appendix \ref{sec:analysis}), and then to
define $e$ as the number for which $\log e = 1$. This is the method used in
classic texts like G. H. Hardy's \textit{A Course of Pure Mathematics}\/
from 1928 \cite{Hardy1928}, Courant and Robbins's \textit{What is
Mathematics?}\/ from 1953 \cite{CourantRobbins1953}, Courant and John's
1965 book \cite{CourantJohn1965}, Olmsted's 1966 book \cite{Olmsted1966},
Apostol's 1967 book \cite{Apostol1967}, and Hadley's 1968 book
\cite{Hadley1968}. It was also used by Spivak \cite{Spivak1994}, as well as
by Patrick \cite{Patrick2010} in the Art of Problem Solving series.

This approach is also used in many multi-edition series, including Thomas'
\textit{Calculus and Analytic Geometry}\/ (a spot-check shows it was in the
4th edition from 1968 \cite{Thomas1968}, the 6th edition from 1984
\cite{ThomasFinney1984}, and the 9th edition from 1996
\cite{ThomasFinneyWeir1996}), Varberg and Purcell
\cite{VarbergPurcell1997}, Fitzpatrick \cite{Fitzpatrick2006}, and the
standard versions of Briggs \cite{Briggs2019Late} and Larson and Edwards
\cite{LarsonEdwards2022Late}.

An alternative pedagogical approach emerged towards the end of the 20th
century. Already in 1970, the book by Flanders, Korfhage, and Price
\cite{FlandersKorfhagePrice1970} defined $e$ the same way we did in Section
\ref{sec:definition}: as the unique real number for which $\lim_{h \to 0}
\frac{e^h - 1}{h} = 1$, i.e., the tangent line at $x = 0$ has slope 1.
Numerical derivative calculations and interpolation are used to
substantiate the existence of such a number (not our horizontal
stretching). The limit $\big(1 + \frac{1}{n}\big)^n$ is not discussed
explicitly. The subsequent 1978 book by Flanders and Price
\cite{FlandersPrice1978} does discuss that limit, and uses Euler's Method
for differential equations to estimate $e$ (like in Section
\ref{sec:france} below). Goldstein et al.\
\cite{GoldsteinLaySchneiderAsmar2017} is similar.

By 1980, Bittinger's series of editions \cite{Bittinger1988, Bittinger1980,
BittingerEllenbogenSurgent2016} also used that definition of $e$, although
when they introduced $e$, they did not provide a rigorous proof for why it
is the same as the limit of $\big(1 + \frac{1}{n}\big)^n$ or $(1+h)^{1/h}$.
Several other books also use that definition of $e$, but provide
derivations of the limit which are very different from ours, such as Herman
and Strang \cite{HermanStrang2020}, Hughes-Hallett, Lock, and Gleason
\cite{HughesHallettLockGleason2014}, Rogawski et al.
\cite{Rogawski2012Late, Rogawski2007Early, RogawskiAdams2015Early}, and
Stewart's \textit{Essential Calculus}\/ \cite{Stewart2013Essential}.

In 1995, Stewart's series of Calculus books \cite{Stewart2016Late,
Stewart1995Early, Stewart2015Early} at its 3rd edition branched into two
versions, a standard version and an ``Early Transcendentals'' version. That
new version was even mentioned in a review in \textit{The American
Mathematical Monthly}\/ \cite{OstebeeZorn1995}. The Early Transcendentals
versions in the series provide two treatments of exponentials and
logarithms within each individual book. Deep in each book, the later
treatment defines the natural logarithm $\log x$ as the integral $\int_1^x
\frac{dt}{t}$, and states that this is the more rigorous way to understand
exponentials and logarithms. Towards the front of each book, an earlier
treatment is inserted as an intuitive but non-rigorous introduction to
exponentials and logarithms, based upon defining $e$ as the unique real
number base which produces an exponential function whose tangent line at $x
= 0$ has slope 1.  The existence of such a number is not justified,
perhaps because the book already uses the later integral definition as its
formal justification for the whole theory. The books deduce the derivative
of the function $e^x$ using the same calculation as equation
\eqref{eq:derivative1=>derivative}, and then use the chain rule and
implicit differentiation to deduce the derivative of the general base-$b$
logarithm:
\begin{align*}
  y &= \log_b x \\
  b^y &= x \\
  b^y (\log_e b) \frac{dy}{dx} &= 1 \\
  \frac{dy}{dx} &= \frac{1}{b^y \log_e b} = \frac{1}{x \log_e b}.
\end{align*}

From there, they deduce the limit of $(1 + h)^{1/h}$ by plugging in $b = e$
and $x = 1$, and writing the difference quotient for the derivative of
$f(x) = \log_e x$:
\[
  1 = \frac{1}{1 \cdot \log_e e} = f'(1)
  =
  \lim_{h \to 0} \frac{\log_e (1+h) - \log_e 1}{h}
  =
  \lim_{h \to 0} \log_e (1+h)^{1/h}
  =
  \log_e \lim_{h \to 0} (1+h)^{1/h},
\]
and so $(1 + h)^{1/h} \to e^1$. This approach is also used in Briggs
\cite{Briggs2019Early}, and Sullivan and Miranda
\cite{SullivanMiranda2014}. The final difference quotient is essentially
the same as our equation \eqref{eq:nln->derivative}, but the derivative of
the logarithm via implicit differentiation (and the chain rule) is not
amenable to deconstruction into Pre-Calculus language.

On the other hand, Early Transcendentals versions of \textit{Thomas'
Calculus}\/ \cite{Thomas2010, Thomas2014} (the first Early Transcendentals
version of \textit{Thomas' Calculus}\/ appeared in 2002 as the updated 10th
edition), provide two proofs of the derivative of the natural logarithm,
one using implicit differentiation as above, and one using the general
inverse function theorem for derivatives which they prove first by
reflection over $y = x$: $(f^{-1})'(x) = \frac{1}{f'(f^{-1}(x))}$. Lang's
1986 Calculus book \cite{Lang1986} is also similar, except that it only
shows one proof of the derivative of $\log_e x$, via the inverse function
theorem for derivatives.  Mathematically, our argument is actually just a
stripped-down version of this treatment, explained in elementary language!
Except, however, that these books still do not provide much justification
for the existence of $e$ (they do not have our horizontal stretch
argument), probably because they rely on the later integral definition to
put exponentials and logarithms on solid footing.

For anyone versed in Calculus, if they were told that there exists a way to
strip down the approach in the previous paragraph and translate everything
into Pre-Calculus language, that would be an easy exercise. However, it is
non-obvious that an intuitive stripped-down Pre-Calculus version should
even exist. On the contrary, usually the deconstruction of a Calculus proof
into Pre-Calculus concepts produces a long and confusing exposition which
is worse than just teaching the Calculus concepts and then using them.
Trying to do that for any of the arguments in any of the other paragraphs
of this section would make a mess. Indeed, imagine trying to translate the
implicit differentiation proof (using the chain rule) of the derivative of
$\log x$ to Pre-Calculus language, which is the cornerstone of the other
approach!

\subsubsection{China}

In China, government schools use standardized curricula, which includes a
mandatory mathematics component that everyone must learn, together with
optional (elective) advanced components which only some students learn.
Teachers use official textbooks as primary references, but some teachers
also teach additional concepts and methods. The author surveyed recent high
school Pre-Calculus math textbooks from four major publishers: Beijing
Normal University Press (北京师范大学出版社) \cite{BeiShiDa2019Mandatory},
Jiangsu's Phoenix Education Publishing (江苏凤凰教育出版社)
\cite{SuJiaoBan2019Mandatory}, People's Education Press (人民教育出版社)
\cite{RenJiaoBan2019Mandatory}, and Shanghai Educational Publishing House
(上海教育出版社) \cite{HuJiaoBan2020Mandatory}. In all four textbooks'
mandatory high school math volumes, $e$ is introduced by stating that there
is an irrational number $e \approx 2.71828$, which is used as the base of
the natural logarithm $\ln x$. No intuition is provided as to what $e$
really is.  In three of those books, no definition is provided which could
be used to estimate $e$. Only Jiangsu provides one, defining $e$ as $1 + 1
+ \frac{1}{2!} + \frac{1}{3!} + \cdots$, but gives no other information
about $e$. None of the books introduce the limit of $\big(1 +
\frac{1}{n}\big)^n$. (The author knows that some teachers in China do teach
this concept, possibly independently of the official textbook.) Instead,
students are presented with formulas for mechanically manipulating natural
logarithms and exponentials, so that they can compute answers. Both Beijing
Normal University Press and People's Education Press include historical
sidebars, where they tell the story of Napier's construction of logarithms
as the result of a physical dynamical process (what we mention for
completeness in Section \ref{sec:napier}). In People's Education Press in
particular, the historical side note mentions that Napier's original
construction produced the function $y = 10^7
\big(\frac{1}{e}\big)^{x/10^7}$, but the textbook does not explain how that
function arose.

In the elective mathematics textbooks (corresponding to the ``second
elective'' in the Chinese curriculum) from Beijing Normal University Press
\cite{BeiShiDa2019Elective2}, Jiangsu's Phoenix Education Publishing
\cite{SuJiaoBan2019Elective2}, People's Education Press
\cite{RenJiaoBan2019Elective2}, and Shanghai Educational Publishing House
\cite{HuJiaoBan2022Elective2}, students are taught the definition of the
derivative as the limit of difference quotients, and then presented with a
table of derivatives to memorize without proof, which simultaneously
includes the derivatives $(x^a)' = a x^{a-1}$, $(a^x)' = a^x \log_e a$,
$(\sin x)' = \cos x$, $(\cos x)' = -\sin x$ and $(\log_a x)' = \frac{1}{x
\log_e a}$, and which mentions the special case $(e^x)' = e^x$. Instead of
proving the statements or providing intuition as to why $e$ appeared
multiple times, the books continue on by showing students how to calculate
derivatives of more complicated functions using those formulas as basic
building blocks. (The author knows that some teachers in China do explain
why $(e^x)' = e^x$, possibly independently of the official textbook.)

\subsubsection{Russia}

This section surveys standard high school Algebra textbooks from Russia,
not Calculus books. Remarkably, even the Algebra textbooks already have a
lot of theory. It is not surprising that they do not have full proofs.  The
United States counterparts of the books in this section are the
Pre-Calculus textbooks in Section \ref{sec:usa-precalculus}.

Multiple Russian high school textbooks define $e$ as the real number for
which the slope of the tangent line to the exponential curve is 1, but
generally do not provide justification for why such a number should exist.
Even a textbook oriented for vocational education students
\cite{Bashmakov2012} uses this approach to define $e$, although it does not
continue on to deduce that the derivative of $e^x$ is itself, nor does it
provide any means to calculate $e$. Books for general students, such as the
high school Algebra book by Kolmogorov et al.\ \cite{Kolmogorov2008}, are
more sophisticated, and use that definition to deduce the derivatives of
$e^x$, $\log_e x$, and solve the differential equation $y' = ky$. A
historical sidebar about Napier states without proof that $\big( 1 +
\frac{1}{n} \big)^n$ converges to $e$. The alternative integral definition
of $\log_e x$ is also mentioned there. The book by Mordkovich and Semenov
\cite{MordkovichSemenov2014} uses a similar approach, just skipping over
the differential equation part and not mentioning the integral definition.

Alimov et al.\ \cite{Alimov2016} takes a different starting point, defining
$e = 1 + \frac{1}{1} + \frac{1}{2!} + \frac{1}{3!} + \cdots$.  However,
this is not used to prove anything (perhaps it is inconvenient to use that
as a starting point), and the book later states that in the course of
higher mathematics, it is proved that $(e^x)' = e^x$.  In all of the above
books, the notion of continuously compounding interest is not mentioned.

\subsubsection{France}
\label{sec:france}

The French educational system (which had some revisions in 2021) has a
track for students who choose math as one of their specialities in
secondary school. Several high school textbooks for that track use
differential equations to define exponentials, logarithms, and $e$. For
example, both Bonnet \cite{Bonnet2011} and Bordas \cite{Bordas2017} define
the exponential function as the solution to the differential equation $y' =
y$ with $y(0) = 1$. Historically, this is the same way that Napier first
defined logarithms. (See Section \ref{sec:napier} for a fuller
description.)

Based upon the publicly visible table of contents of the book by Thirioux
\cite{Thirioux2015}, that reference's construction of the exponential
function relates to Euler's Method for differential equations. That likely
then deduces the following justification for the limit $\big(1 +
\frac{x}{n}\big)^n$ from the differential equation.

\begin{proof}[Proof that $\big(1 + \frac{x}{n}\big)^n \to e^x$.]
  Let $x_0 = 0$ and $y_0 = 1$, and let $x_k = \frac{kx}{n}$ for $k = 1, 2,
  \ldots, n$. We will use Euler's Method to calculate the corresponding
  $y$-coordinates so that $(x_k, y_k)$ approximately follow the solution
  curve to $y' = y$. When we are finished, $y_n$ will be an approximation
  for $e^{x_n} = e^x$. To obtain $y_{k+1}$ from $y_k$, since $y' = y$, move
  along the line with slope $y_k$ for $\frac{x}{n}$ units in the
  $x$-direction. That produces
  \[
    y_{k+1}
    =
    y_k + y_k \cdot \frac{x}{n} 
    =
    \left(1 + \frac{x}{n}\right) y_k
  \]
  so $y_0 = 1$, $y_1 = 1 + \frac{x}{n}$, $y_2 = \big(1 +
  \frac{x}{n}\big)^2$, \ldots, and $y_n = \big(1 + \frac{x}{n} \big)^n$ is
  an approximation to $e^x$.
\end{proof}

\subsubsection{Germany}
\label{sec:germany}

The German high school textbooks surveyed (Griesel et al.\
\cite{Griesel2000} and Frudigmann et al.\ \cite{Freudigmann2011}) both
define $e$ as the base of the exponential function for which its tangent
line at $x=0$ has slope 1.  They both motivate this definition by
considering the difference quotient for $e^x$, and so they then both deduce
that the derivative of $e^x$ is $e^x$ in the same way as equation
\eqref{eq:derivative1=>derivative} in our Section
\ref{sec:derivative-group}.

They also both introduce the limit $\big(1 + \frac{1}{n} \big)^n$. However,
in both cases, their justification for this limit to be $e$ starts from the
limit of the difference quotient for $e^x$ at $(0, 1)$, and then:
\begin{align*}
  \frac{e^{1/n} - 1}{\frac{1}{n}} &\approx 1 \\
  e^{1/n} &\approx 1 + \frac{1}{n} \\
  e &\approx \left( 1 + \frac{1}{n} \right)^n.
\end{align*}
The final line happens to be true, but there is an issue here which can be
quite confusing to students (and which actually needs justification).  The
definition of ``$\approx$'' is ambiguous. For example, in the final line,
``$\approx$'' is meant to indicate that the left and right sides of the
equation differ by a quantity which tends to 0 as $n \to \infty$.
Unfortunately, if that is the definition of ``$\approx$'', then the second
line could be written as $e^{1/n} \approx 1$, and then raising both sides
to the $n$-th power, one would obtain $e \approx 1^n = 1$, which is false.
There is a way to justify this argument, for example, by writing
\begin{align*}
  \frac{e^{1/n} - 1}{\frac{1}{n}} &= 1 + o(1) \\
  e^{1/n} &= 1 + \frac{1}{n} + o\left(\frac{1}{n}\right) \\
  e &= \left( 1 + \frac{1}{n} + o\left(\frac{1}{n}\right) \right)^n \\
  e &= \left( 1 + \frac{1}{n} \right)^n + o(1),
\end{align*}
where the final step is an application of the Binomial Theorem. That
algebraic approach could be confusing to a student at that mathematical
stage though. Students at that stage often find visual intuition easier to
grasp than algebraic intuition. That said, these books were published in
2000 and 2011, and it is possible that newer editions have different
treatments.

\subsubsection{United Kingdom}
\label{sec:uk}

There is some overlap between the Calculus books listed in Section
\ref{sec:usa-calculus} and British audiences. For example, G. H. Hardy was
listed there. This section focuses on secondary school textbooks. It seems
that British students typically do not encounter $e$ in GCSE Mathematics,
and $e$ first appears in A-level mathematics. Two such textbooks are
surveyed in this section.

Attwood et al.\ \cite{Attwood2017} is aligned with the Pearson Edexcel
A-level Pure Mathematics exam syllabus, and covers exponential functions in
its last chapter. It introduces $e$ by displaying three plots. The first
plot simultaneously shows a curve labeled $y = 2^x$, and another curve
labeled $\frac{dy}{dx} = (0.693\ldots) 2^x$. The second plot shows two
curves labeled $y = 3^x$ and $\frac{dy}{dx} = (1.099\ldots) 3^x$.  The
third plot shows two curves labeled $y = 4^x$ and $\frac{dy}{dx} =
(1.386\ldots) 4^x$. The book then states without proof that by visually
observing the graphed curves, ``In each case $f'(x) = kf(x)$, where $k$ is
a constant.'' This is a statement about entire curves being proportional,
which is a strong claim. It proceeds to say that ``there is going to be a
unique value of $a$ where the gradient function is exactly the same as the
original function.'' It then defines that number to be $e$, and states that
$e^x$ is its own derivative. Compound interest does not appear, nor does
the limit $\big(1 + \frac{1}{n}\big)^n$.

Goldie et al.\ \cite{Goldie2017} is approved by the Assessment and
Qualifications Alliance (AQA) exam board, and follows its syllabus. This
book introduces $e$ with an example of a loan shark charging 100\% interest
per annum, compounding with higher and higher frequency. It then defines
$e$ to be the limit of $\big(1 + \frac{1}{n}\big)^n$, and introduces the
continuously-compounded-interest formula $P e^{rt}$. Later, it teaches the
derivative of $e^x$ through an activity which asks the student to use
graphing software to find the gradient of $y = e^x$ at several different
points and search for a coincidence. Based upon this experimental numerical
evidence, the book states that $e^x$ is its own derivative.

\subsubsection{Singapore}
\label{sec:singapore}

In 2016, Singapore topped the world in the OECD's Programme for
International Student Assessment (Pisa) tests \cite{Coughlan2016Pisa}. Yet
for years, Singapore had already exported its mathematics curriculum
worldwide.  A 2004 \textit{Wall Street Journal}\/ article
\cite{Prystay2004Singapore} discussed the Massachusetts education
commissioner's plan to import Singapore's math curriculum as an
intervention. According to that article, at the time, the approach had
already ``been adopted in about 200 schools nationwide, from rural Oklahoma
to the inner cities of New Jersey.'' In the United States, however,
Singapore Math textbooks are typically used in earlier grades, and do not
cover logarithms and exponentials.

The first curricular stage in Singapore where students encounter $e$
appears to be ``Additional Mathematics'' in high school. A review of two
such textbooks, one by Teh and Loh \cite{TehLoh2007}, and one by Ho and
Khor \cite{HoKhor2014}, shows that similarly to the United States, those
books introduce $e$ as a decimal approximation, and as the limit of $\big(1
+ \frac{1}{n}\big)^n$ with applications in compound interest.

Interestingly, both textbooks also cover derivatives and integrals. The
first book leads students on an exploration using graphing technology which
can ``Draw Tangent.'' Students create an experimental table for the tangent
slopes to $y = \log_e x$ at $x \in \{0.1, 0.25, 0.5, 1, 2, 3, 4, 5, 6\}$,
and then are asked ``Can you guess the derivative of the curve $y = \ln
x$?'' No proof is supplied.

The second book writes the difference quotient for the general exponential
function like in many of the United States Calculus books (Section
\ref{sec:usa-calculus}):
\[
  \frac{d}{dx} a^x
  =
  \lim_{h \to 0} \frac{a^{x+h} - a^x}{h}
  =
  a^x \cdot \lim_{h \to 0} \frac{a^h - 1}{h},
\]
and like those Calculus books, concludes that the key point is to
understand the final limit. However, when explaining why that limit is 1
when $a = e$, the book provides the same non-rigorous justification as in
Section \ref{sec:germany}.
\begin{align*}
  \frac{a^h - 1}{h} &\approx 1 \\
  a^h &\approx 1 + h \\
  a &\approx (1+h)^{1/h}.
\end{align*}
As in Section \ref{sec:germany}, the main issue is that the notion of
``$\approx$'' is used imprecisely, and requires the resolution we supplied
in that section.

\subsubsection{YouTube}
\label{sec:youtube}

The popular video-sharing platform YouTube contains an enormous number of
videos whose titles reveal that many people have questions about why $e$ is
natural, such as \textit{What's so special about Euler's number $e$?}\/ from
3Blue1Brown \cite{3Blue1Brown2017} and \textit{What is the number ``$e$'' and
where does it come from?} by Eddie Woo \cite{EddieWoo2015}. As of April 9,
2025, they have 4.4 million and 3.5 million views, respectively. The author
searched for videos about $e$ on YouTube, and collected the most-viewed
videos in Table \ref{tab:youtube}. He watched all of them, but did not find
our argument in any of them. In fact, many of the videos only stated facts
about $e$ without providing explanations. So, he will publish a new video
about this method shortly, and it will be linked from
\url{https://www.poshenloh.com/e}

\begin{table}[htbp]
  \centering
  \begin{tabular}{lcr}
    \hline
    Channel & Date Published & Views \\
    \hline
    Numberphile \cite{Numberphile2016} & 2016-12-19 & 4.7M \\
    3Blue1Brown \cite{3Blue1Brown2017} & 2017-05-02 & 4.4M \\
    Eddie Woo \cite{EddieWoo2015} & 2015-02-23 & 3.5M \\
    Infinity Learn NEET \cite{InfinityLearnNEET2016} & 2016-12-08 & 3M \\
    MindSphereYT \cite{MindSphereYT2024} & 2024-04-04 & 1.3M \\
    Zach Star \cite{ZachStar2019} & 2019-05-15 & 1.2M \\
    polymathematic \cite{Polymathematic2022} & 2022-09-13 & 894K \\
    Mathacy \cite{Mathacy2021} & 2021-01-17 & 695K \\
    Li Yongle (李永乐老师) \cite{LiYongle2018} & 2018-06-13 & 687K \\
    Tarek Said \cite{TarekSaid2022} & 2022-03-04 & 656K \\
    MommyTalk (妈咪说) \cite{MommyTalk2019} & 2019-01-07 & 605K \\
    Daniel Rubin \cite{DanielRubin2021} & 2021-06-14 & 419K \\
    Math Centre \cite{MathCentre2016} & 2016-07-29 & 402K \\
    Mathologer \cite{Mathologer2017} & 2017-03-30 & 394K \\
    MIT OpenCourseWare \cite{MITOpenCourseWare2009} & 2019-01-16 & 360K \\
    Domotro \cite{Domotro2022} & 2022-09-02 & 356K \\
    Better Explained \cite{BetterExplained2012} & 2012-01-31 & 340K \\
    diplomatic fish \cite{DiplomaticFish2022} & 2022-08-16 & 289K \\
    Dr Sean \cite{DrSean2024} & 2024-06-24 & 271K \\
    Math and Science \cite{MathAndScience2020} & 2020-03-24 & 243K \\
    Ali the Dazzling \cite{AliTheDazzling2024} & 2024-12-15 & 206K \\
    Foolish Chemist \cite{FoolishChemist2024} & 2024-10-31 & 134K \\
    The Organic Chemistry Tutor \cite{OrganicChemTutor2020} & 2020-01-21 & 132K \\
    blackpenredpen \cite{Blackpenredpen2017} & 2017-09-24 & 101K \\
    Khan Academy \cite{KhanAcademy2017} & 2017-07-25 & 78K \\
    \hline
  \end{tabular}
  \caption{Popular YouTube videos about $e$, totaling over 25 million views
  as of April 9, 2025.}
  \label{tab:youtube}
\end{table}

\subsection{Some historical context}
\label{sec:history}

We include this section to provide some historical elements that
contextualize why the landscape of teaching and understanding $e$ is so
fractured. It turns out that the various different properties of $e$
historically arose from different directions of inquiry, and they all
converged to the same marvelous number. A full account of the history of
$e$ would be longer than the rest of this article. We are happy to point
the voraciously curious reader to Maor's 200-page book about $e$ from 1994
\cite{Maor1994}, Coolidge's \textit{American Mathematical Monthly}\/ article
about $e$ from 1950 \cite{Coolidge1950}, Hobson's 50-page book about
Napier's invention from 1914 \cite{Hobson1914}, or Cantor's comprehensive
mathematical history of mathematics from 1898 (in German)
\cite{Cantor1898}.

\subsubsection{Napier}
\label{sec:napier}

The word ``logarithm'' was coined by Napier, based upon two Greek roots:
\textit{logos} meaning ``ratio,'' and \textit{arithmos} meaning ``number.''
Napier's original 1614 work \cite{Napier1614Descriptio} (with construction
explained in his posthumous publication \cite{Napier1619Constructio}), was
not motivated by studying exponentials at all. Rather, he developed an
amazing new way to accelerate arithmetic.  When adding two $n$-digit
numbers, only about $n$ operations are needed, because one only needs to
add digit-by-digit (possibly with carries). On the other hand, the standard
method of multiplying two $n$-digit numbers requires a number of operations
which is quadratic in $n$ (every digit in one number multiplies against
every digit in the other number). The standard method of division is an
even more strenuous chore. His logarithms converted multiplication to
addition, and division to subtraction. He published this as
\textit{Mirifici Logarithmorum Canonis Descriptio,} which means ``On The
Amazing Canon of Logarithms.'' His preface says (English translation
courtesy of GPT-o1):

\bigskip

\latinenglish{%
Quum nihil sit (charissimi mathematum cultores) mathematicae
praxi tam molestum, quodque Logistas magis remoretur, ac retarder, quam
magnorum numerorum multiplicationes, partitiones, quadratique ac
cubicae extractiones, quae praeter prolixitatis tedium, lubricis etiam
erroribus plurimum sunt obnoxiae: Coepi igitur animo revolvere, qua
arte certa \& expedita, possem dicta impedimenta amoliri. Multis
subinde in hunc finem perpensis, nonnulla tandem inveni praeclara
compendia, alibi fortasse tractanda: verum inter omnia nullum hoc
utilius, quod una cum multiplicationibus, partitionibus, \& radicum
extractionibus arduis \& prolixis, ipsos etiam numeros multiplicandos,
dividendos, \& in radices resolvendos ab opere rejicit, \& eorum loco
alios substituit numeros, qui illorum munere fungantur per solas
additiones, subtractiones, bipartiones, \& tripartitiones. Quod quidem
arcanum, cum (ut cetera bona) sit, quo communius, eo melius: in
publicum mathematicorum usum propalare libuit. Eo itaque libere
fruamini (matheseos studiosi) \& qua a me profectum est benevolentia,
accipite. Valete.%
}{%
Since nothing in practical mathematics, dearest lovers of the subject, is
so troublesome—or so apt to delay and hinder calculators—as the
multiplication and division of large numbers and the extraction of square
and cube roots, operations that are not only tediously long but also
dangerously prone to error, I began to consider by what reliable and speedy
method these obstacles might be swept away. After weighing many
possibilities, I finally discovered some remarkable shortcuts (to be
discussed elsewhere); yet none is more useful than this: it abolishes,
along with the arduous and lengthy multiplications, divisions, and root
extractions, even the very numbers that must be multiplied, divided, or
resolved into roots, and replaces them with other numbers that perform the
same tasks by mere additions, subtractions, halving, and thirding. Because
this secret—like every good thing—grows better the more widely it is
shared, I have chosen to publish it for the common benefit of
mathematicians. Enjoy it freely, students of mathematics, and accept it in
the spirit of goodwill from which it comes. Farewell.%
}

\bigskip

Napier was not focusing on which base he used to construct the logarithms.
Rather, he was trying to convert multiplication to addition. He was
particularly interested in multiplying sines and cosines, because that
comes up in spherical trigonometry (important for maritime navigation), and
so the 90 pages of logarithm tables that he published were the logarithms
of the sines of all angles from 0 degrees 0 minutes to 89 degrees 59
minutes ($d$ degrees $m$ minutes is $d + \frac{m}{60}$ degrees).
Consequently, his main objective was to define a logarithm $f(y)$ for each
number $0 \leq y \leq 1$, because $0 \leq \sin \theta \leq 1$ for all
$0^\circ \leq \theta \leq 90^\circ$.

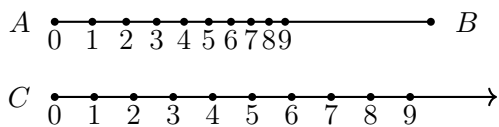
\begin{figure}[htbp]
  \centering
  \begin{tikzpicture}[
      every node/.style={inner xsep=3pt, inner ysep=3pt},
      x=5mm, y=5mm
    ]
    \def\L{10} 

    \coordinate (A) at (0, 2);
    \coordinate (B) at (\L, 2);
    \coordinate (P1) at ({\L*(1-0.9)}, 2);
    \coordinate (P2) at ({\L*(1-0.9^2)}, 2);
    \coordinate (P3) at ({\L*(1-0.9^3)}, 2);
    \coordinate (P4) at ({\L*(1-0.9^4)}, 2);
    \coordinate (P5) at ({\L*(1-0.9^5)}, 2);
    \coordinate (P6) at ({\L*(1-0.9^6)}, 2);
    \coordinate (P7) at ({\L*(1-0.9^7)}, 2);
    \coordinate (P8) at ({\L*(1-0.9^8)}, 2);
    \coordinate (P9) at ({\L*(1-0.9^9)}, 2);

    \pgfmathsetmacro{\step}{1.05*0.1*\L}
    \coordinate (C) at (0, 0);
    \coordinate (Q1) at (\step, 0);
    \coordinate (Q2) at (2*\step, 0);
    \coordinate (Q3) at (3*\step, 0);
    \coordinate (Q4) at (4*\step, 0);
    \coordinate (Q5) at (5*\step, 0);
    \coordinate (Q6) at (6*\step, 0);
    \coordinate (Q7) at (7*\step, 0);
    \coordinate (Q8) at (8*\step, 0);
    \coordinate (Q9) at (9*\step, 0);

    \draw[thick] (A) -- (B);
    \node[left=6pt] at (A) {$A$};
    \node[right=6pt] at (B) {$B$};
    \fill (A) circle (1.5pt);
    \fill (B) circle (1.5pt);
    \node[below] at (A) {0};

    \foreach \point/\label in {P1/1, P2/2, P3/3, P4/4, P5/5, P6/6, P7/7, P8/8, P9/9} {
      \fill (\point) circle (1.5pt);
      \node[below] at (\point) {\label};
    }

    \draw[thick, ->] (C) -- (11.2*\step, 0);
    \node[left=6pt] at (C) {$C$};
    \fill (C) circle (1.5pt);
    \node[below] at (C) {0};

    \foreach \x/\label in {Q1/1, Q2/2, Q3/3, Q4/4, Q5/5, Q6/6, Q7/7, Q8/8, Q9/9} {
      \fill (\x) circle (1.5pt);
      \node[below] at (\x) {\label};
    }
  \end{tikzpicture}

  \caption{Napier's construction. Two points move simultaneously, starting
  from $A$ and $C$ at the same initial speeds. When the point on the top
  line segment is at position $t$, the point on the bottom ray is
  also at position $t$.}
  \label{fig:napier}
\end{figure}

His amazing realization was that he could convert multiplication to
addition by defining the logarithm via the following physical dynamical
process with two simultaneously moving particles (see Figure
\ref{fig:napier}). One particle moves along a finite line segment from $A$
to $B$ of length $L = \text{10,000,000}$. The other particle moves along an
infinite linear ray starting from $C$. Both particles start out moving at
the same speed, and the particle on the infinite linear ray constantly
maintains that same speed. However, the most interesting part about this
system is that the particle on the finite line segment moves at a speed
which is proportional to the distance remaining to $B$. In Napier's words:

\bigskip

\latinenglish{%
Linea proportionaliter in breviorem decrescere dicitur, quum punctus eam
transcurrens aequalibus momentis, segmenta abscindit ejusdem continuo
rationis ad lineas a quibus abscinduntur.%
}{%
A line is said to diminish proportionally toward its shorter end when a
point that moves along it, in equal intervals of time, cuts off successive
segments that always stand in the same constant ratio to the portions from
which they are taken.%
}

\bigskip

Napier interprets positions on the line segment $AB$ by defining $L =
\text{10,000,000}$ to be the position of $A$, and 0 to be the position of
$B$. On the ray, 0 is the position of $C$, and positions increase
indefinitely to the right. Napier's logarithm of a number $y$ is defined to
be the corresponding position $x$ of the particle on the ray from $C$, when
the particle on line segment $AB$ is at position $y$. This is precisely the
differential equation $y' = -\frac{y}{L}$ with initial condition $y(0) =
L$. The solution to this differential equation is $y = L e^{-\frac{x}{L}}$,
which fundamentally involves $e$. Indeed, Napier's logarithm of a number
$y$ is $-L \log_e \frac{x}{L}$.  Napier probably defined $L =
\text{10,000,000}$ to avoid decimals, because he wrote

\bigskip

\latinenglish{Ut sit semi-diameter seu sinus totus rationalis numerus
10,000,000}{Let the semidiameter, or whole sine, be the rational number
10,000,000}

\bigskip

His final definition is actually independent of the scaling choice for
sine.

\bigskip

\latinenglish{%
6. Def. Logarithmus ergo cujusque sinus, est numerus quam proxime definiens
lineam, quae aequaliter crevit, interea, dum sinus totius, linea
proportionaliter in sinum illum descrevit, existente utroque motu
synchrono, atque initio aequiveloce.%
}{%
Def. 6. Thus a sine’s logarithm is the number nearly defining the line that
grew uniformly while the whole sine proportionally sank into it, both
motions synchronous and at first equally swift.%
}

\bigskip

Today, we think of the whole sine as 1, not 10,000,000. Remarkably, his
final definition, if used with our standard scaling of sines, would produce
a choice of $L = 1$, and would yield the solution $y = e^{-x}$.  To
appreciate his choice of $L = \text{10,000,000}$ as a decimal point shift,
it is instructive to look at his 90 pages of logarithm tables. For example,
he writes that for $18^\circ$, the sine is 3090170, and its logarithm is
11743586. In reality, $\sin 18^\circ = 0.3090170$ and $\log_e \sin 18^\circ
= -1.1743590$, which is quite good, with a decimal point shift. So,
Napier's original logarithm was almost the natural logarithm, and would
have been exactly the natural logarithm if he had defined $\sin 90^\circ =
1$. He inadvertently but fundamentally involved $e$, by devising a process
analogous to the differential equation: Fact
\factref{differential-equation}.

\subsubsection{Other early work}

Napier's work generated great excitement. The contemporary mathematician
Briggs was so enthused that he traveled to meet Napier, and proposed to
rescale the logarithms so that they were organized around a base derived
from 10 instead of derived from $e$ (see Briggs \cite{Briggs1624,
Briggs1617} and the history by Bruce \cite{Bruce2002}). Like Napier, Briggs
also avoided decimals. For example, his table in \textit{Arithmetica
Logarithmica}\/ states that the logarithm of 2488 is 3,39585,03760,1979. In
reality, $\log_{10} 2488 = 3.39585037601878$, which is quite good, with a
decimal point shift.

The most commonly taught definition of $e$ is Fact
\factref{compound-interest}, based upon the limit of $\big(1 +
\frac{1}{n}\big)^n$. That independently arose from the study of compound
interest. Bernoulli's 1690 publication \cite{Bernoulli1690} not only
studied continuously compounding interest, but also wrote the infinite
series Fact \factref{series} $\frac{1}{0!} + \frac{1}{1!} + \frac{1}{2!} +
\cdots$. He did not show his work, but given that his starting point was
compounding interest with increasing frequency, he almost definitely was
working with $\big(1 + \frac{1}{n}\big)^n$.

Another commonly taught definition of the natural logarithm starts from the
integral of $\frac{1}{x}$. In the middle of the 17th century,
mathematicians discovered that the area under the hyperbola was related to
logarithms. Burn \cite{Burn2001} discusses the relationship between the
historical works of Saint-Vincent \cite{SaintVincent1647} and de Sarasa
\cite{deSarasa1649}, who are the two mathematicians typically associated
with this discovery.

Euler's 1748 \textit{Introductio in Analysin Infinitorum}\/
\cite{Euler1748} wrote the power series for the base-$a$ logarithm:
\[
  \log_a (1+x)
  =
  \frac{1}{k} \left(
  \frac{x}{1} - \frac{x^2}{2} + \frac{x^3}{3} -
  \frac{x^4}{4} + \cdots
  \right)
\]
where the constant $k$ is related to the base $a$ in both of the following
ways.
\begin{align*}
  a &= 1 + \frac{k}{1} + \frac{k^2}{2!} + \frac{k^3}{3!} + \cdots \\
  k &= \frac{a-1}{1} - \frac{(a-1)^2}{2} + \frac{(a-1)^3}{3} -
  \frac{(a-1)^4}{4} + \cdots
\end{align*}
He calculated that for base-10 logarithms, that constant $k \approx
2.30258$ (which is about $\log_e 10$). Then he observed that the constant
$k$ can be 1 by plugging $k=1$ into the first equation in the above pair to
select the base $a = 1 + \frac{1}{1} + \frac{1}{2!} + \frac{1}{3!} +
\cdots$. He chose the letter $e$ and introduced it as the natural base, as
written below.

\bigskip

\latinenglish{%
Quod si iam ex hac basi Logarithmi construantur, ii vocari solent
Logarithmi naturales seu hyperbolici, quoniam quadratura hyperbol\ae per
istiusmodi Logarithmos exprimi potest. Ponamus autem brevitatis gratia pro
numero hoc 2,718281828459 \&c.\ constanter litteram $e$, qu\ae ergo
denotabit basin Logarithmorum naturalium seu hyperbolicorum, cui respondet
valor litterae $k = 1$; sive h\ae c littera $e$ quoque exprimet summam
huius Seriei $1 + \frac{1}{1} + \frac{1}{1 \cdot 2} + \frac{1}{1 \cdot 2
\cdot 3} + \frac{1}{1 \cdot 2 \cdot 3 \cdot 4} + $ \&c.\ in infinitum.%
}{%
But if logarithms are now constructed from this base, they are commonly
called natural or hyperbolic logarithms, because the quadrature of the
hyperbola can be expressed through such logarithms. For the sake of
brevity, let us consistently denote this number, 2.718281828459\ldots\ by
the letter $e$, which will therefore represent the base of the natural (or
hyperbolic) logarithms, corresponding to the value $k = 1$; or
alternatively, this letter $e$ will also express the sum of the series
$1 + \frac{1}{1} + \frac{1}{1 \cdot 2} + \frac{1}{1 \cdot 2
\cdot 3} + \frac{1}{1 \cdot 2 \cdot 3 \cdot 4} + \cdots$%
}

\subsubsection{Could this be new?}

The mathematical facts are definitely not new. Yet surprisingly, the
educational approach in this article seems hard to find elsewhere. Given
the many different angles of approach to $e$ covered in the previous
historical sections, it is natural that there has been much variation in
how $e$ has been taught.  This is not the first time this topic has been
written about in the context of mathematics education (e.g., Evans ``The
Teacher's Department'' of \textit{National Mathematics Magazine}\/ in 1939
\cite{Evans1939}). Nor is it the first time that alternative ways of
teaching about $e$ have been explored (e.g., K\'os and K\'os
\cite{KosKos2004}).

Prior to the mass proliferation of calculators, the logarithm sections of
old secondary school textbooks devoted considerable attention to the
original purpose of Napier: accelerating arithmetic. They taught how to use
logarithm tables and slide rules. It would not have been practical to graph
exponential functions, or to estimate $3^{1/1.09867}$. In the United
States, the proliferation of the intuitive definition of $e$ as the
exponential base giving a tangent slope of 1 appears to be roughly
concurrent with the development of low-cost calculators. According to the
survey \cite{Smithsonian2025Calculators} by the Smithsonian Institution, it
was in 1970 and 1971 that handheld calculators appeared in the United
States, introduced by Busicom and Sharp from Japan. The Hewlett-Packard
HP-35 came out in 1972, at a retail price of \$395. Also in 1972, Texas
Instruments launched the TI-2500 for \$149.95, and it was just a basic
4-function calculator with no powers, logarithms, etc. In 1973, TI released
the SR-10 for \$150, and it added reciprocals, squares, and square roots,
but still did not have powers or logarithms. Prices continued to decline.
In 1974, the TI-50 came out at \$170, and had powers and logarithms.
In 1975, the HP-21 brought that functionality down to \$125. The author was
born just in time for such calculators to become a household item. He
remembers playing with a Casio scientific calculator (probably a Casio
\textit{fx-120}\/) as a young child in the 1980's, where he would randomly
press buttons and admire the glowing green vacuum fluorescent display while
the calculator devoured batteries.

So, it makes sense that this teaching approach would only appear in the
past 50 years. As mentioned several times previously in this article, if
anyone with a solid Calculus background were told that such an approach was
possible and pointed to the right parts of various Calculus books to
distill and combine, they could easily fill in the details. However, the
goal of Calculus books was to teach Calculus, and there it is natural to
build Calculus machinery for later use (so those authors have no need to
distill their coverage to the Pre-Calculus level). Meanwhile, the goal of
Pre-Calculus was to teach without Calculus, so $e$ slipped through the
cracks.

The author happened to bridge between both worlds, because in addition to
teaching at the university level, he also regularly teaches secondary
school students. He frequently visits secondary schools and guest-lectures
undercover in regular-size classrooms (with the consent of the school, of
course), with a particular emphasis on schools where students are
struggling with mathematics. Those schools introduce him to their
classrooms as ``This is Mr.\ Po, your substitute teacher for today,''
without mentioning any of his other educational background or work. He uses
this approach to appreciate firsthand how regular schoolchildren think, so
that he can develop ways to help students love thinking, and to improve
access to opportunity via education.

\section{Conclusion}
\label{sec:conclusion}

The number $e$ is fascinating, and is at the center of many important
mathematical concepts. It is a shame that $e$ remains a mystery to most
people, with properties that are often memorized. The English-language
Wikipedia article about $e$ is a salad of properties, without intuition
about how the core properties are related.

What, then, is the purpose of mathematics education? Is the objective to
teach students how to rapidly and accurately perform formulaic computations
involving \textit{ln}, based upon a mysterious number $e$? At the time of
this article's writing, affordable artificial intelligence tools can
already execute sophisticated calculations. Instead, perhaps the purpose of
mathematics education should be to promote creative thinking, logical
reasoning, and curiosity.  While artificial intelligence will likely
someday be able to do those too, it is all the more important that humanity
is good at thinking.  Instead of only being able to figure out\footnote{The
  astute reader may have observed in Section \ref{sec:definition} that the
  numerically-estimated slope of the tangent line to $y = 3^x$ at $x=0$ was
  $1.09867 \approx \log_e 3$, as Calculus says it should be. Then
  $3^{1/1.09867}$ is a good approximation for $e$ because $3^{1/\log_e 3} =
  3^{1/\frac{\log_3 3}{\log_3 e}} = 3^{\log_3 e} = e$.} what $3^{1/\log_e
  3}$ is, it is more healthy for students to wonder why \textit{ln}\/ is
  natural, and seek to understand it.

Hopefully, this article will cause textbooks to update all around the
world, to rescue this important math concept from the category of ``magic''
into the category of ``logic.'' For that purpose, perhaps the most
important parts of this treatment to integrate are the definition of $e$
via horizontal stretching (Section \ref{sec:definition}) and the visual
bridge to connect to continuously compounded interest (Section
\ref{sec:derivative-1=>compound-interest}), together with some commentary
on what ``natural'' means (e.g., Section \ref{sec:natural}). It also makes
sense to include the difference quotient (equation
\eqref{eq:derivative1=>derivative}) which shows that the slope of the
tangent to $y = e^x$ is always $e^x$. Ironically, that then becomes one of
the easiest derivative formulas to prove!

The author is particularly sensitive to this interplay between ``magic''
and ``logic'' because in his observation, when a student's approach to math
devolves from logic to memorization, it becomes very difficult to advance.
Hopefully in the future, when people are asked ``what is natural about
$e$,'' they will have a conceptual answer, with an appreciation for the
elegance of mathematical reasoning and coincidence. Hopefully they will not
say that $e$ is some strange irrational number involved with \textit{ln},
and that they feel more comfortable with $\text{\textit{log}}_{10}$.

Indeed, $e$ is much more natural than 10. If one day we encounter outer
space aliens, more likely than not, they will consider 10 to be the strange
number (unless by some equally strange coincidence they also have 10
fingers).  Aliens might even disagree on whether $\pi$ $(\approx 3.14)$ or
$\tau$ $(\approx 6.28)$ is the right choice for the circular constant. But
they'll definitely use $e$.

{\raggedright
  \ifarxiv
    \nocite{*}
    \printbibliography  
  \else
    \printbibliography
  \fi
}

\appendix

\section{Why are exponential functions differentiable?}
\label{sec:analysis}

In most secondary school textbooks, students are told one or more of the
following facts, without proof:

\begin{itemize}
  \item The limit of $\big(1 + \frac{1}{n}\big)^n$ exists
  \item Every exponential function is continuous
  \item Every exponential function is differentiable
  \item The definite integral of a continuous function $\int_a^b f(x) dx$
    exists
  \item The solution to the differential equation $y' = y$ with $y(0) = 1$
    exists and is unique
\end{itemize}

If one is satisfied with that same standard of rigor, then this appendix is
not necessary. However, if one is curious whether our definition of $e$
actually has a solid foundation without circular logic, it is necessary to
prove that (any) exponential function is differentiable.

We include this appendix because the ``Early Transcendentals'' versions of
several popular United States Calculus textbooks (see Section
\ref{sec:usa-calculus}) initially introduce $e$ using the
tangent-slope-of-1 definition, but ultimately define $e$ in a later section
as the integral of the function $\frac{1}{x}$. In that later part, the
books state that their earlier tangent-slope-of-1 definition of $e$ was
intuitive but based upon numerical and visual evidence, and the integral
version puts the definition of exponentials, logarithms, and $e$ on surer
footing.

Importantly, this appendix shows that the entire theory of exponential
functions can be established without needing to introduce the integral. The
definition $\log x = \int_1^x \frac{dt}{t}$ also relies on a deep theorem:
all of the surveyed introductory Calculus books only state without proof
that continuous functions are integrable. It makes sense for Calculus books
to assume that theorem in their exposition, because they need to use the
existence of the integral for much of their subsequent Calculus content.
This appendix shows that the mathematical foundation needed for the
intuitive visual definition of $e$ is actually more elementary than proving
the existence of the integral, and so it is safe to use that intuitive
definition without feeling like one is secretly cheating by not using the
integral.

For completeness, we start by establishing a fact that is usually taken for
granted: the existence of the positive integer power and root functions. We
only need them for positive real number inputs.

\begin{proposition}
  For any positive integer $n$, the function $f(x) = x^n$ is well-defined
  for all positive real numbers $x$, is increasing, continuous, and
  invertible, and its range equals the set of all positive real numbers.
  \label{prop:int-pow-continuous}
\end{proposition}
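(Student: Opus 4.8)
The plan is to establish the five claimed properties in a logically economical order, deriving each from the previous ones and isolating completeness of the reals as the single ingredient that is not pure algebra. Well-definedness is immediate: for a positive real $x$, the value $x^n$ is a finite product of positive reals, hence a well-defined positive real, and this observation already shows that the range is contained in the set of positive reals. The workhorse for everything else is the factorization identity, valid for all real $u$ and $v$,
\[
  u^n - v^n = (u - v)\sum_{k=0}^{n-1} u^k v^{\,n-1-k},
\]
which follows from a one-line induction on $n$ (or by recognizing the right-hand side as a telescoping product).

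Next I would read off monotonicity and injectivity: if $0 < v < u$, then $u - v > 0$ and the sum above is a sum of positive terms, so $u^n - v^n > 0$; thus $f$ is strictly increasing on the positive reals, hence injective. Continuity at an arbitrary $x_0 > 0$ then comes from the same identity: restricting attention to $x$ with $|x - x_0| < x_0$ (so that $0 < x < 2x_0$) and applying the identity with $(u,v) = (x, x_0)$, each of the $n$ summands is less than $(2x_0)^{n-1}$, so that $|x^n - x_0^n| \le n(2x_0)^{n-1}\,|x - x_0|$. Given $\epsilon > 0$, taking $\delta = \min\{\,x_0,\ \epsilon / (n(2x_0)^{n-1})\,\}$ witnesses continuity at $x_0$ in the $\epsilon$--$\delta$ sense.

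The step I expect to be the main obstacle — really the only non-elementary step — is surjectivity onto the positive reals, which is where completeness of the reals is used. Given a target $c > 0$, the plan is to exhibit $a, b > 0$ with $a^n < c < b^n$: take $a = \min\{\,c/2,\ 1/2\,\}$, so that $a \le 1$ forces $a^n \le a < c$, and take $b = 1 + c$, so that $b \ge 1$ forces $b^n \ge b > c$ (or invoke Bernoulli's inequality, $b^n \ge 1 + nc > c$). Applying the Intermediate Value Theorem to the continuous function $f$ on the closed interval $[a,b]$ then yields some $x \in (a,b)$ with $x^n = c$. If one wishes to avoid quoting the Intermediate Value Theorem in an appendix about foundations, the same conclusion follows by setting $x = \sup\{\,t > 0 : t^n < c\,\}$ — a nonempty set bounded above by $b$ — and ruling out both $x^n < c$ and $x^n > c$ using continuity together with strict monotonicity; this is essentially the proof of the Intermediate Value Theorem specialized to $f$.

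Finally I would assemble the conclusion: strict monotonicity gives injectivity, the surjectivity argument shows the range is exactly the set of positive reals (the reverse inclusion having been noted at the start), and together these say that $f$ is a bijection from the positive reals onto the positive reals, hence invertible. Its inverse is by definition the function $x \mapsto \sqrt[n]{x}$, which is thereby established to be well-defined, increasing, and (by the standard fact that the inverse of a continuous strictly monotone function on an interval is continuous) continuous as well — facts that later parts of the appendix will use freely.
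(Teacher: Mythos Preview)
Your proof is correct. It differs from the paper's in two structural ways worth noting. First, you work directly over $\mathbb{R}^+$ throughout, using the factorization $u^n - v^n = (u-v)\sum_{k=0}^{n-1} u^k v^{n-1-k}$ as the single algebraic tool for monotonicity and a local Lipschitz bound, and then invoke the Intermediate Value Theorem (or its supremum-based proof) for surjectivity. The paper instead first restricts $f$ to $\mathbb{Q}^+$, uses the Binomial Theorem to show $f$ is increasing there and that $(x+h)^n - x^n < Mh$ uniformly on a bounded region, and then argues directly that no interval $[b,c]$ is missing from $f(\mathbb{Q}^+)$ by stepping through the arithmetic progression $h^n, (2h)^n, (3h)^n, \ldots$ with a sufficiently small rational $h$; completeness is then used to extend the increasing, gap-free function to a continuous bijection on $\mathbb{R}^+$. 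Your route is the more standard and economical one for this particular proposition. What the paper's detour buys is a \emph{template}: the very next proposition (that $a^x$ is well-defined, increasing, continuous, and surjective) genuinely requires defining the function first on $\mathbb{Q}$ and then extending, and the paper's proof of the present proposition is written to be reused almost verbatim there. So the two arguments trade local directness against global parallelism with the harder case that follows.
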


\begin{proof}
  First consider $f : \mathbb{Q^+} \rightarrow \mathbb{R}^+$ as a function
  from positive rationals to positive reals. When $x = \frac{m}{n}$ is
  rational, $f(x)$ is well-defined because it is just the result of
  repeated multiplication of a rational number by itself. It is increasing
  on the positive rational numbers because for any positive numbers $x$ and
  $y$, the Binomial Theorem expansion of $(x+y)^n$ starts with $x^n$, and
  all other terms are positive, so $(x+y)^n > x^n$.

  Next, we will show that there does not exist any interval $[b, c]$ with
  $0 < b < c$ which is wholly missing from the range $f(\mathbb{Q}^+)
  \subset \mathbb{R}^+$. That, together with the increasing-ness of $f$ for
  rational inputs, will imply (via a fundamental property of real numbers)
  that there is a unique way to extend $f$ to a continuous (and invertible)
  function from $\mathbb{R}^+ \rightarrow \mathbb{R}^+$, whose range is all
  positive real numbers.

  So, suppose for the sake of contradiction that there does exist such an
  interval. We claim that there is a positive real number $M$ such that for
  any $x \leq c$ and $0 < h < 1$, the difference $(x + h)^n - x^n < Mh$.
  The reason is because the Binomial Theorem expansion of $(x + h)^n$
  produces an initial term which cancels with the $x^n$, and the remainder
  is $\sum_{k=1}^n \binom{n}{k} h^k x^{n-k}$. In the range $0 < h < 1$, we
  have that all $h^n < h^{n-1} < \cdots < h^2 < h^1$, so this remainder is
  positive and at most
  \[
    \sum_{k=1}^n \binom{n}{k} h^1 x^{n-k}
    <
    h \sum_{k=0}^n \binom{n}{k} x^{n-k}
    =
    h (x+1)^n
    \leq
    h (c+1)^n,
  \]
  where we used the Binomial Theorem again to recognize $(x+1)^n$. So $M =
  (c+1)^n$ works.

  Therefore, by choosing $h$ to be any particular rational number smaller
  than $\frac{c-b}{M}$, while stepping through all of the $n$-th powers
  $h^n$, $(2h)^n$, $(3h)^n$, \ldots, their successive differences are
  always less than $c-b$, and so one of them must lie in the interval $[b,
  c]$, contradiction.
\end{proof}

Now that we know positive integer powers are defined for all positive reals
(as are all positive integer roots, because the above function was proven
to be invertible), we can construct the exponential function.

\begin{proposition}
  For any real number $a > 1$, the function $f(x) = a^x$ is well-defined
  for all real numbers $x$, is increasing, continuous, and invertible, and
  its range equals the set of all positive real numbers.
  \label{prop:exp-continuous}
\end{proposition}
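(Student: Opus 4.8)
The plan is to mirror the structure of the proof of Proposition~\ref{prop:int-pow-continuous}: first pin down $f$ on the rationals, then extend to all of $\mathbb{R}$ using completeness. First I would define $a^{m/n} := \sqrt[n]{a^m}$, which makes sense for every rational exponent by the integer power and root functions supplied by Proposition~\ref{prop:int-pow-continuous}, and dispatch the routine bookkeeping: the value is independent of how the rational is written (since $a^{mk} = (a^m)^k$ and $n$-th roots are unique), the law $a^{p+q} = a^p a^q$ holds for rationals $p, q$, and---crucially, because $a > 1$---the function is strictly increasing in the rational exponent. Monotonicity on $\mathbb{Q}$ reduces to the claim that $a^p > 1$ for every positive rational $p$, which is immediate: $a > 1$ forces $a^m > 1$, hence $\sqrt[n]{a^m} > 1$, and then $a^{p+q} = a^p a^q > a^p$ whenever $q > 0$.

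The heart of the argument, and the step I expect to be the main obstacle, is the single estimate $a^{1/n} \to 1$ as $n \to \infty$. Writing $a^{1/n} = 1 + \delta_n$ with $\delta_n > 0$, the Binomial Theorem gives $a = (1 + \delta_n)^n \geq 1 + n\delta_n$, so $\delta_n \leq \frac{a-1}{n} \to 0$. By monotonicity this upgrades to $a^h \to 1$ as the rational $h \to 0$ (squeeze between $a^0 = 1$ and $a^{1/n}$), and hence $a^{-h} = 1/a^h \to 1$ as well. Combined with the identity $a^{p+h} - a^p = a^p(a^h - 1)$, this shows that on any bounded range of rational exponents the ``steps'' of $f$ can be made uniformly small. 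Exactly as in Proposition~\ref{prop:int-pow-continuous}, this implies that the range $f(\mathbb{Q})$ misses no interval $[b, c]$ with $0 < b < c$: starting from a rational power $a^{-Nh} < b$ (which exists since $a^{-Nh} \to 0$) and stepping the exponent up by a small rational $h$ chosen so that $c(a^h - 1) < c - b$, the powers climb past $c$ while their successive gaps stay below $c - b$, so the last one still below $c$ must lie in $(b, c)$.

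Finally I would invoke the same completeness property of $\mathbb{R}$ used in the previous proof: a strictly increasing function on $\mathbb{Q}$ whose range meets every interval extends uniquely to a strictly increasing \emph{continuous} function $\mathbb{R} \to \mathbb{R}^+$, which is therefore invertible with range all of $\mathbb{R}^+$. Concretely the extension is $a^x = \sup\{ a^p : p \in \mathbb{Q},\ p \leq x \}$. Everything past the $a^{1/n} \to 1$ estimate is either an appeal to this completeness principle verbatim from Proposition~\ref{prop:int-pow-continuous} or routine manipulation of exponent laws, so the real content is isolating that one limit and packaging it into the ``no missing interval'' statement.
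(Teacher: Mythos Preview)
Your proposal is correct and follows essentially the same architecture as the paper's proof: define $a^x$ on $\mathbb{Q}$, verify monotonicity via $a^p > 1$ for positive rational $p$, establish that $f(\mathbb{Q})$ misses no interval $[b,c]$, and invoke the same completeness extension principle as in Proposition~\ref{prop:int-pow-continuous}. The one technical difference is in how the key limit is obtained: you use Bernoulli's inequality $(1+\delta_n)^n \ge 1 + n\delta_n$ to force $a^{1/n} - 1 \le \frac{a-1}{n}$, whereas the paper iterates square roots and appeals to $\sqrt{1+x} < 1 + 0.6x$ to get $a^{1/2^n} \to 1$, then phrases the stepping argument multiplicatively (the ratio $a^{1/2^n} < c/b$ prevents the geometric progression from jumping over $[b,c]$) rather than additively as you do. Your Bernoulli route is arguably the cleaner of the two and makes the dependence on $n$ explicit, but both are short and the overall strategy is identical.
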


\begin{proof}
  First consider $f : \mathbb{Q} \rightarrow \mathbb{R}^+$ as a function
  from rationals to positive reals. When $x = \frac{m}{n}$ is rational,
  $f(x)$ is well-defined because it is the composition of
  positive-integer-root and power functions $(a^m)^{1/n}$. It is
  increasing on rational numbers because for rationals $x < y$, $a^y = a^x
  a^{y-x}$, and every positive rational power of $a$ is greater than 1,
  because all positive integer powers and roots of reals greater than 1
  remain greater than 1.

  As in the proof of Proposition \ref{prop:int-pow-continuous}, it suffices
  to prove that there does not exist any interval $[b, c]$ with $0 < b < c$
  which is wholly missing from the range $f(\mathbb{Q}) \subset
  \mathbb{R}^+$. Suppose for the sake of contradiction that there does
  exist such an interval. Then there must exist a positive integer $n$ for
  which $a^{1/2^n} < \frac{c}{b}$, by considering taking successive square
  roots of $a$ (that process converges to 1 because once $x$ is
  sufficiently close to 0, $\sqrt{1 + x} < 1 + 0.6x$; the inequality can be
  proven through straightforward algebra by considering the squares of both
  sides). This implies that there is some positive integer $k$ for which
  the geometric progression $a^{k/2^n}$ hits the interval $[b, c]$, because
  it is not possible to jump over the entire interval.
\end{proof}

The remainder of this appendix proves that the exponential function is
differentiable. It turns out that a key useful property for that purpose
(and in general) is convexity. The following statement implies that the
shape formed by all points that lie on or above the graph of $y = f(x)$
forms a (geometrically) convex set in the plane: every line segment between
two points in that set is wholly contained in the set.

\begin{proposition}
  For any real number $a > 1$, and any real numbers $b < c$, within the
  open interval $(b, c)$, the graph of the function $f(x) = a^x$ lies
  strictly below the chord between its points $(b, a^b)$ and $(c, a^c)$.
  \label{prop:exp-convex}
\end{proposition}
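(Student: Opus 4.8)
The plan is to prove the equivalent analytic statement: for every $t\in(0,1)$, writing $x=(1-t)b+tc$, one has $a^{x}<(1-t)a^{b}+t\,a^{c}$. Since the chord through $(b,a^{b})$ and $(c,a^{c})$ is the affine function $L$ with $L(x)=(1-t)a^{b}+t\,a^{c}$ at $x=(1-t)b+tc$, this is exactly the assertion that $a^{x}$ lies strictly below the chord on $(b,c)$, i.e.\ that $a^{x}$ is strictly convex. I would build it in three stages, from easy special cases up to all real $t$, using only the two-variable AM--GM inequality together with the monotonicity and continuity already in Propositions~\ref{prop:int-pow-continuous} and~\ref{prop:exp-continuous}.

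First, the dyadic backbone. At the midpoint $t=\tfrac12$, set $u=a^{b}$ and $v=a^{c}$; the standard power law (valid by continuity from the rational case, as in the construction of Proposition~\ref{prop:exp-continuous}) gives $a^{(b+c)/2}=a^{b/2}a^{c/2}=\sqrt{uv}$, so strict midpoint convexity is just $\sqrt{uv}<\tfrac{u+v}{2}$, i.e.\ $(\sqrt u-\sqrt v)^{2}>0$, which holds because $a>1$ and $b\neq c$ force $u\neq v$. A straightforward induction on $n$ then yields the strict inequality at every dyadic parameter $t=k/2^{n}\in(0,1)$: the point with parameter $(2i+1)/2^{n+1}$ is the midpoint of the two consecutive level-$n$ points, so strict midpoint convexity combined with the (weak) level-$n$ bounds of those two points against $L$, together with $a^{b}=L(b)$ and $a^{c}=L(c)$ at the ends, pushes it strictly below $L$. (Alternatively one can handle all rational $t=p/q$ at once: $a^{x}=\bigl((a^{b})^{q-p}(a^{c})^{p}\bigr)^{1/q}$ is the geometric mean of the list consisting of $a^{b}$ with multiplicity $q-p$ and $a^{c}$ with multiplicity $p$, so the $q$-variable AM--GM inequality, strict because that list is not constant, gives $a^{x}<\tfrac{(q-p)a^{b}+p\,a^{c}}{q}=(1-t)a^{b}+t\,a^{c}$.)

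Second, weak convexity everywhere. For arbitrary $t_{0}\in(0,1)$ choose dyadic $t_{n}\to t_{0}$ in $(0,1)$. The map $t\mapsto(1-t)b+tc$ is affine, hence continuous, and $a^{(\cdot)}$ is continuous by Proposition~\ref{prop:exp-continuous}, so $a^{(1-t_{n})b+t_{n}c}\to a^{(1-t_{0})b+t_{0}c}$, while the right-hand sides $(1-t_{n})a^{b}+t_{n}a^{c}$ also converge; passing to the limit in the backbone inequalities gives the weak bound $a^{(1-t_{0})b+t_{0}c}\le(1-t_{0})a^{b}+t_{0}a^{c}$. Since $b<c$ were arbitrary, $a^{x}$ is weakly convex on all of $\mathbb{R}$.

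Third, recovering strictness at an arbitrary interior point; this is the one step that needs a small trick, since a limit of strict inequalities is only weak. Fix $t_{0}\in(0,1)$ and put $x_{0}=(1-t_{0})b+t_{0}c$, with $L$ the chord function. Choose a dyadic $r$ with $0<r<t_{0}$ and set $\beta=(1-r)b+rc$, so that $b<\beta<x_{0}<c$ and hence $x_{0}=(1-s)\beta+sc$ for some $s\in(0,1)$. Weak convexity applied to the endpoints $\beta$ and $c$ gives $a^{x_{0}}\le(1-s)a^{\beta}+s\,a^{c}$. Now $a^{\beta}<L(\beta)$ by the backbone (strict, since $r$ is dyadic) and $a^{c}=L(c)$, and as $1-s>0$ while $L$ is affine with $x_{0}=(1-s)\beta+sc$, we get $(1-s)a^{\beta}+s\,a^{c}<(1-s)L(\beta)+sL(c)=L(x_{0})$. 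Chaining the two inequalities yields $a^{x_{0}}<L(x_{0})$, as desired. I expect this last maneuver to be the only genuine obstacle; the remaining content is the routine interplay of AM--GM with the continuity and strict monotonicity established earlier.
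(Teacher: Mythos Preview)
Your proof is correct and follows essentially the same route as the paper: apply AM--GM to a list of copies of $a^{b}$ and $a^{c}$ to obtain strict convexity at all rational (or dyadic) parameters, then extend by continuity. Your Stage~3 is more explicit than the paper's one-line ``by continuity'' about why the \emph{strict} inequality survives at irrational parameters---a point the paper leaves implicit---so your write-up is in fact slightly more careful on that detail.
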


\begin{proof}
  By continuity, it suffices to prove that for any rational number $0 <
  \frac{m}{n} < 1$, the point on the curve whose $x$-coordinate is
  $\frac{m}{n}$ of the way along from $b$ to $c$ is strictly below the
  chord. That point on the chord is
  \[
    \left(
    \left( \frac{n-m}{n} \right) b + \left(\frac{m}{n}\right) c,
    \left( \frac{n-m}{n} \right) a^b + \left(\frac{m}{n}\right) a^c
    \right)
  \]
  Applying the $n$-variable arithmetic-mean-geometric-mean
  inequality\footnote{We actually only need the AM-GM inequality with the
    number of variables $n$ being a power of 2, because continuity still
    completes the proof even if we only consider rational numbers with
    denominators which are powers of 2. The AM-GM inequality is
    particularly easy to prove via induction when $n$ is a power of 2. The
    base case $\sqrt{xy} < \frac{x+y}{2}$ follows by considering the
    squares of both sides. Then we can successively apply the inequality,
    e.g., $(x_1 x_2 \cdots x_{2n})^{1/(2n)} = \sqrt{(x_1 \cdots x_n)^{1/n}
    (x_{n+1} \cdots x_{2n})^{1/n}} < \frac{1}{2} \big( (x_1 \cdots
    x_n)^{1/n} + (x_{n+1} \cdots x_{2n})^{1/n} \big) < \frac{1}{2n}
    \big(x_1 + \cdots x_{2n}\big)$.} to $n-m$ instances of $a^b$ and $m$
  instances of $a^c$:
  \begin{align*}
    \left( (a^b)^{n-m} (a^c)^m \right)^{\frac{1}{n}}
    &<
    \frac{(n-m) a^b + m a^c}{n} \\
    a^{\frac{1}{n} \left( (n-m) b + mc\right)}
    &<
    \frac{(n-m) a^b + m a^c}{n},
  \end{align*}
  which precisely means that the function lies strictly below the chord at
  the $x$-coordinate in question, completing the proof.
\end{proof}

This allows us to prove that both one-sided limits of the difference
quotient exist at $x = 0$. In fact, we are able to prove something even
stronger.

\begin{proposition}
  For any real number $a > 1$, the difference quotient $\frac{a^h - 1}{h}$
  is an increasing function of $h$ on $\mathbb{R}^-$, and also on
  $\mathbb{R}^+$, and
  \[
    \sup_{h < 0} \frac{a^h - 1}{h}
    =
    \lim_{h \to 0^-} \frac{a^h - 1}{h}
    \leq
    \lim_{h \to 0^+} \frac{a^h - 1}{h}
    =
    \inf_{h > 0} \frac{a^h - 1}{h}
  \]
  \label{prop:exp-one-sided}
\end{proposition}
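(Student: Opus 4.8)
The plan is to derive everything from the convexity established in Proposition~\ref{prop:exp-convex}, via the classical ``three-slopes'' fact that a convex function has monotone secant slopes. Concretely, first I would prove the lemma that for $f(x) = a^x$ and any $x_1 < x_2 < x_3$,
\[
  \frac{f(x_2) - f(x_1)}{x_2 - x_1}
  <
  \frac{f(x_3) - f(x_1)}{x_3 - x_1}
  <
  \frac{f(x_3) - f(x_2)}{x_3 - x_2}.
\]
To see this, write $x_2 = (1-t)x_1 + t x_3$ where $t = \tfrac{x_2 - x_1}{x_3 - x_1} \in (0,1)$. Proposition~\ref{prop:exp-convex} gives the single strict inequality $f(x_2) < (1-t) f(x_1) + t f(x_3)$; rearranging it and dividing once by $x_2 - x_1 = t(x_3 - x_1)$ and once by $x_3 - x_2 = (1-t)(x_3 - x_1)$ produces the left and right inequalities, respectively. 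This is the only substantive step.

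Next I would apply the lemma to the secant slope $g(h) := \frac{a^h - 1}{h} = \frac{f(h) - f(0)}{h - 0}$ in three configurations. Taking $(x_1,x_2,x_3) = (0,h_1,h_2)$ with $0 < h_1 < h_2$ shows $g(h_1) < g(h_2)$, so $g$ is increasing on $\mathbb{R}^+$. Taking $(x_1,x_2,x_3) = (h_1,h_2,0)$ with $h_1 < h_2 < 0$ and observing that $\frac{f(0) - f(h_1)}{0 - h_1} = \frac{a^{h_1} - 1}{h_1} = g(h_1)$ shows $g(h_1) < g(h_2)$, so $g$ is increasing on $\mathbb{R}^-$. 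Taking $(x_1,x_2,x_3) = (h_1,0,h_2)$ with $h_1 < 0 < h_2$ shows that every left difference quotient is below every right one, i.e.\ $\sup_{h<0} g(h) \le \inf_{h>0} g(h)$, and in particular both extrema are finite real numbers (each side is bounded by any single value from the other side).

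Finally, since $g$ is increasing and bounded above on $\mathbb{R}^-$, the standard monotone-limit argument gives $\lim_{h \to 0^-} g(h) = \sup_{h<0} g(h)$; likewise $g$ is increasing and bounded below on $\mathbb{R}^+$ (indeed $g > 0$ there), so $\lim_{h \to 0^+} g(h) = \inf_{h>0} g(h)$. Chaining these two identities with the inequality $\sup_{h<0} g(h) \le \inf_{h>0} g(h)$ yields the full displayed chain. I do not expect a genuine obstacle here; the one place to be careful is the sign bookkeeping in the $h < 0$ case, where $0$ lies to the \emph{right} of $h$, so one must match $g(h)$ to the \emph{middle} term of the three-slopes lemma rather than the left term, and confirm $\frac{f(0)-f(h)}{0-h} = \frac{a^h-1}{h}$.
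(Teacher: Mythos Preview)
Your proposal is correct and follows essentially the same approach as the paper: both derive the monotonicity of $h \mapsto \frac{a^h-1}{h}$ directly from the strict convexity of Proposition~\ref{prop:exp-convex} via the secant-slope comparison, then conclude by the monotone-limit principle. The only cosmetic difference is that the paper phrases the secant comparison geometrically (Figure~\ref{fig:exp-convex}(a)) and invokes ``a similar argument'' for the other two cases, whereas you state and prove the full three-slopes lemma algebraically and apply it explicitly in each configuration.
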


\begin{proof}
  Since Proposition \ref{prop:exp-convex} showed the function $f(x) = a^x$
  was convex, the geometrical argument in Figure \ref{fig:exp-convex} shows
  that the difference quotient $\frac{a^h - 1}{h}$ is increasing on
  $\mathbb{R}^+$. A similar argument handles $\mathbb{R}^-$. Another
  similar argument implies that for any $h_1 < 0 < h_2$, we always have
  $\frac{a^{h_1} - 1}{h_1} < \frac{a^{h_2} - 1}{h_2}$. Those facts are
  sufficient to establish the result.

  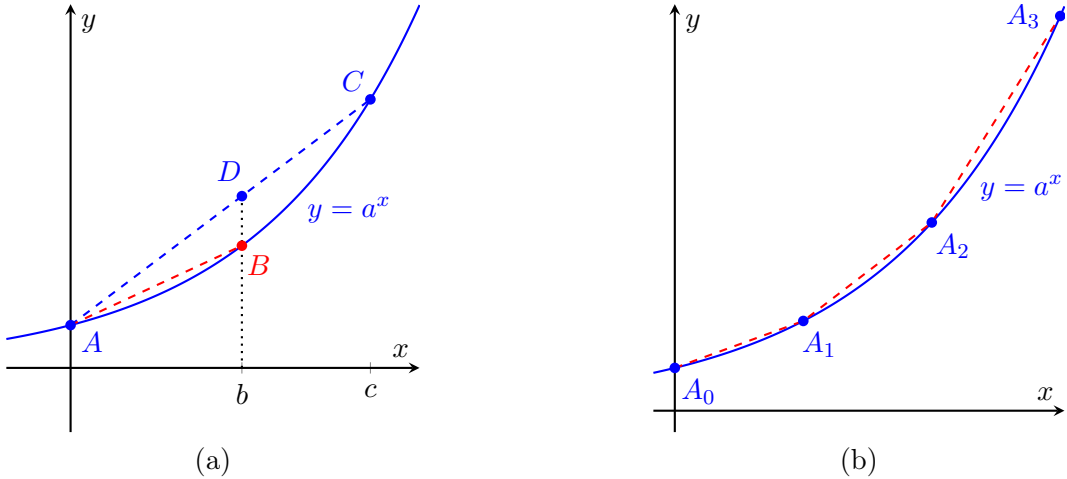
\begin{figure}[htbp]
    \centering

    \begin{minipage}[t]{0.48\textwidth}
      \centering
      \begin{tikzpicture}
        \begin{axis}[
            domain=-1.5:8.5,
            ymin=-1.5, ymax=8.5,
            restrict y to domain=-1.5:8.5,
            axis lines=middle,
            xlabel={$x$}, ylabel={$y$},
            samples=200,
            thick,
            every axis plot/.style={smooth},
            axis equal image,
            xtick={4, 7},
            xticklabels={$b$, $c$},
            ytick=\empty
          ]
          \addplot[blue] {1.3^x};
          \node[blue] at (axis cs:6.5,3.7) {$y = a^x$};

          \draw[blue, dashed] (axis cs:0,1) -- (axis cs:7,1.3^7);
          \draw[red, dashed] (axis cs:0,1) -- (axis cs:4,1.3^4);
          \draw[dotted] (axis cs:4,0) -- (axis cs:4,4.0142);

          \fill[blue] (axis cs:0,1) circle[radius=2pt];
          \node[blue] at (axis cs:0.5,0.6) {$A$};
          \fill[blue] (axis cs:7,1.3^7) circle[radius=2pt];
          \node[blue] at (axis cs:6.6,6.7) {$C$};
          \fill[red] (axis cs:4,1.3^4) circle[radius=2pt];
          \node[red] at (axis cs:4.4,2.4) {$B$};
          \fill[blue] (axis cs:4,4.0142) circle[radius=2pt];
          \node[blue] at (axis cs:3.7,4.6) {$D$};
        \end{axis}
      \end{tikzpicture}

      (a)
    \end{minipage}
    \hfill
    \begin{minipage}[t]{0.48\textwidth}
      \centering
      \begin{tikzpicture}
        \begin{axis}[
            domain=-0.5:9.5,
            ymin=-0.5, ymax=9.5,
            restrict y to domain=-0.5:9.5,
            axis lines=middle,
            xlabel={$x$}, ylabel={$y$},
            samples=200,
            thick,
            every axis plot/.style={smooth},
            axis equal image,
            xtick=\empty,
            ytick=\empty
          ]
          \addplot[blue] {1.28^x};
          \node[blue] at (axis cs:8.1,5.2) {$y = a^x$};

          \coordinate (A0) at (axis cs:0,1);
          \coordinate (A1) at (axis cs:3,2.097);
          \coordinate (A2) at (axis cs:6,4.398);
          \coordinate (A3) at (axis cs:9,9.223);

          \draw[red, dashed] (A0) -- (A1) -- (A2) -- (A3);

          \fill[blue] (A0) circle[radius=2pt];
          \node[blue] at ([xshift=9pt, yshift=-9pt] A0) {$A_0$};

          \fill[blue] (A1) circle[radius=2pt];
          \node[blue] at ([xshift=6pt, yshift=-9pt] A1) {$A_1$};

          \fill[blue] (A2) circle[radius=2pt];
          \node[blue] at ([xshift=8pt, yshift=-9pt] A2) {$A_2$};

          \fill[blue] (A3) circle[radius=2pt];
          \node[blue] at ([xshift=-14pt, yshift=0pt] A3) {$A_3$};

        \end{axis}
      \end{tikzpicture}

      (b)
    \end{minipage}

    \caption{(a) Consider arbitrary real numbers $0 < b < c$, and define
      the points $A(0, 1)$, $B(b, a^b)$, and $C(c, a^c)$. Since the
      function is convex, the point $D$ on chord $AC$ with the same
      $x$-coordinate as $B$ must lie strictly above $B$. Therefore, the
      slope of line $AB$ is less than the slope of line $ADC$, which
      translates to the desired inequality $\frac{a^b - 1}{b} < \frac{a^c -
      1}{c}$.\\
      \mbox{} \quad (b) The slopes of $A_0 A_1$, $A_1 A_2$, $A_2 A_3$ are
      in increasing order, and each successive difference between slopes is
      at least the $\Delta$ defined in \eqref{eq:exp-slopes-increasing}.
    }
    \label{fig:exp-convex}
  \end{figure}
\end{proof}

We are now able to prove differentiability.

\begin{proposition}
  For any real number $a > 1$, the function $f(x) = a^x$ is differentiable
  at every point.
  \label{prop:exp-differentiable}
\end{proposition}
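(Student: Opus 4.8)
The plan is to reduce everything to differentiability at the single point $x = 0$, and then to upgrade the two one-sided derivatives there (which Proposition~\ref{prop:exp-one-sided} already produced) to a genuine two-sided derivative. First I would record the factoring identity
\[
  \frac{a^{x+h} - a^x}{h} = a^x \cdot \frac{a^h - 1}{h},
\]
valid for all real $x$ and all $h \neq 0$ because Proposition~\ref{prop:exp-continuous} guarantees that $a^x$ is defined everywhere and obeys $a^{x+h} = a^x a^h$. Since $a^x$ is a fixed positive constant once $x$ is chosen, this identity shows that $f$ is differentiable at $x$ if and only if $\lim_{h \to 0} \frac{a^h - 1}{h}$ exists, in which case $f'(x) = a^x f'(0)$. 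So it suffices to treat $x = 0$.

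At $x = 0$, Proposition~\ref{prop:exp-one-sided} already supplies more than half of the work: the one-sided limits $L^- = \lim_{h \to 0^-} \frac{a^h - 1}{h}$ and $L^+ = \lim_{h \to 0^+} \frac{a^h - 1}{h}$ both exist (as the supremum, resp.\ infimum, of monotone families) and satisfy $L^- \leq L^+$. The remaining task is the reverse inequality $L^+ \leq L^-$, which will force the two-sided limit to exist. For that, the key observation is that the left and right difference quotients are linked by a factor tending to $1$: for $h > 0$,
\[
  \frac{a^{-h} - 1}{-h} = \frac{1 - a^{-h}}{h} = a^{-h} \cdot \frac{a^h - 1}{h}.
\]
Letting $h \to 0^+$, the left-hand side runs over difference quotients at arguments $-h \to 0^-$, hence converges to $L^-$; on the right-hand side, $a^{-h} \to 1$ by continuity (Proposition~\ref{prop:exp-continuous}) and $\frac{a^h - 1}{h} \to L^+$, so the product converges to $L^+$. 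Therefore $L^- = L^+$, the two-sided derivative $f'(0)$ exists, and differentiability at every point follows from the factoring identity above.

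I expect the only real obstacle to be precisely this last step of matching the two one-sided derivatives; everything else is bookkeeping. The point is that convexity alone (Proposition~\ref{prop:exp-convex}) yields one-sided derivatives but not their equality — a generic convex function can have corners — and it is the multiplicative structure $a^{x+h} = a^x a^h$ together with continuity at $0$ that closes the gap. Should the identity $\frac{a^{-h}-1}{-h} = a^{-h}\,\frac{a^h-1}{h}$ feel too slick for the intended audience, one could instead phrase the same idea as a squeeze, sandwiching $\frac{a^{-h}-1}{-h}$ between $\frac{a^h-1}{h}$ and $a^{-h}\,\frac{a^h-1}{h}$ and passing to the limit; but the identity makes even that unnecessary.
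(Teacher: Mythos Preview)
Your proof is correct, and it takes a genuinely different route from the paper's. Both arguments begin with the same factoring identity to reduce to $x=0$ and invoke Proposition~\ref{prop:exp-one-sided} for the existence of the one-sided limits $L^- \le L^+$; the divergence is in how the gap $\Delta = L^+ - L^-$ is closed. You exploit the multiplicative symmetry directly via the identity $\frac{a^{-h}-1}{-h} = a^{-h}\cdot\frac{a^h-1}{h}$ and pass to the limit, obtaining $L^- = L^+$ in one line. The paper instead argues by contradiction: it observes that at every point $x$ the one-sided derivatives differ by $a^x \Delta \ge \Delta$, partitions $[0,1]$ into $n$ equal pieces, and uses convexity (Proposition~\ref{prop:exp-convex}) to chain the inequalities $f'_+(0) < s(A_0A_1) < f'_-(1/n) < f'_+(1/n) < \cdots < f'_+(1)$, accumulating a total gap of at least $n\Delta$; choosing $n$ large forces this to exceed the fixed quantity $f'_+(1)-f'_+(0)$. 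Your approach is shorter and more transparent for this specific function, leaning squarely on the functional equation and continuity at $0$; the paper's approach is more in the spirit of general convex-function theory (a convex function cannot have a uniformly positive derivative jump on an interval), which is perhaps why it was chosen in an appendix whose theme is convexity. Either way, your observation that convexity alone cannot rule out corners, and that the multiplicative structure is what does the work, is exactly the right diagnosis.
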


\begin{proof}
  The calculation from Section \ref{sec:derivative-group} shows that at
  any $x$, the difference quotient of $f(x)$ is
  \[
    \frac{a^{x+h} - a^x}{h} = a^x \cdot \frac{a^h - 1}{h}.
  \]
  For every $x$, define $f'_+(x) = \lim_{h \to 0^+} \frac{a^{x+h} -
  a^x}{h}$ and $f'_-(x) = \lim_{h \to 0^-} \frac{a^{x+h} - a^x}{h}$. Let
  \begin{equation}
    \Delta
    =
    \lim_{h \to 0^+} \frac{a^h - 1}{h} - \lim_{h \to 0^-} \frac{a^h - 1}{h},
    \label{eq:exp-slopes-increasing}
  \end{equation}
  which exists and is non-negative by Proposition \ref{prop:exp-one-sided}.
  It suffices to prove that $\Delta = 0$.

  Assume for the sake of contradiction that $\Delta > 0$. Let $M$ be the
  difference $f'_+(1) - f'_+(0)$. Fix any positive integer $n$ for which
  $n \Delta > M$. Now, define points $A_0$, $A_1$, \ldots, $A_n$, where
  $A_k$ is at $\big(\frac{k}{n}, a^{k/n}\big)$, as illustrated in Figure
  \ref{fig:exp-convex}. Let $s(A_k A_{k+1})$ denote the slope of $A_k
  A_{k+1}$. Proposition \ref{prop:exp-one-sided} implies that:
  \[
    f'_+\left(\frac{0}{n}\right)
    <
    s(A_0 A_1)
    <
    f'_-\left(\frac{1}{n}\right)
    \overset{*}{<}
    f'_+\left(\frac{1}{n}\right)
    <
    s(A_1 A_2)
    <
    f'_-\left(\frac{2}{n}\right)
    \overset{*}{<}
    f'_+\left(\frac{2}{n}\right),
  \]
  where crucially, each inequality marked with a star has a gap of exactly
  $a^{k/n} \Delta$ for some $k$ ranging from $0$ to $n$. All of those
  $a^{k/n}$ are at least 1, so the starred gaps are at least $\Delta$.
  Continuing this chain of inequalities all the way to
  $f'_+\big(\frac{n}{n}\big)$, we get $f'_+(0) < f'_+(1)$ with a gap of at
  least $n \Delta > M$, contradicting the fact that the gap was exactly
  $M$. This completes the proof.
\end{proof}

Therefore, everything that we have done is both intuitive and on sound
mathematical footing.

\end{CJK}
\end{document}